\documentclass[a4,11pt]{article}
\setlength\topmargin{0mm}
\setlength\headheight{0mm}
\setlength\headsep{0mm}
\setlength\topskip{0mm}
\setlength\textheight{230mm}
\setlength\footskip{10mm}
\setlength\oddsidemargin{0mm}
\setlength\evensidemargin{0mm}
\setlength\textwidth{160mm}
\usepackage{amsmath,amsthm,mathrsfs,amssymb}
\newcommand{\C}{\mathbb{C}}

\newcommand{\N}{\mathbb{N}}
\newcommand{\R}{\mathbb{R}}
\newcommand{\Z}{\mathbb{Z}}
\newcommand{\E}{\mathbb{E}}

\newcommand{\T}{\mathcal{T}}
\newcommand{\den}{\mathcal{D}}
\newcommand{\myrare}{\smash[b]{\underline{{\mathcal{R}}}}_r(\xi)}
\newcommand{\myden}{\smash[b]{\underline{{\mathcal{D}}}}_r(\xi)}

\newcommand{\set}[1]{\left\{#1\right\}}
\newcommand{\e}{\epsilon}
\renewcommand{\S}{\mathcal{S}}
\renewcommand{\P}{\mathbb{P}}
\renewcommand{\index}{i \mspace{-4mu} \imath}

\newtheorem{thm}{Theorem}
\newtheorem*{thm*}{Theorem}
\newtheorem{lem}{Lemma}
\newtheorem{prop}[thm]{Proposition}
\newtheorem*{prop*}{Proposition}
\newtheorem{cor}{Corollary}
\newtheorem*{spec}{Spectral control}

\theoremstyle{definition}
\newtheorem{rem}{Remark}
\newtheorem{Def}{Definition}

\makeatletter
\def\appendix{%
\def\theequation{\Alph{section}.\arabic{equation}}%
\setcounter{equation}{0}%
\def\thesection{\Alph{section}}%
\@addtoreset{equation}{section}%
\setcounter{section}{0}%
\def\@seccntformat##1{%
\@nameuse{prefix@##1}%
\@nameuse{the##1}%
\@nameuse{postfix@##1}\quad}%
\def\prefix@section{Appendix~}%
} 
\makeatother

\begin{document}

\title{Brownian survival and Lifshitz tail in perturbed lattice disorder }
\author{Ryoki Fukushima}
\date{\today}
\maketitle

\begin{abstract}
We consider the annealed asymptotics for the survival probability of Brownian motion among randomly distributed traps. 
The configuration of the traps is given by independent displacements of the lattice points. 
We determine the long time asymptotics of the logarithm of the survival probability up to a multiplicative constant. 
As applications, we show the Lifshitz tail effect of the density of states of the associated random Schr\"{o}dinger operator 
and derive a quantitative estimate for the strength of intermittency in the parabolic Anderson problem.\\
\textbf{Keywords}: Brownian motion; random media; perturbed lattice; random Schr\"odinger operators; Lifshitz tail; 
random displacement model\\
\textbf{MSC 2000 subject classification}: 60K37; 60G17; 82D30; 82B44\\
\end{abstract} 

\section{Introduction}\label{intro}
We consider the annealed asymptotics for the survival probability of Brownian motion among randomly distributed traps. 
This problem for the Poissonian configuration of traps was firstly investigated by Donsker and Varadhan \cite{DV75c}
and later by Sznitman \cite{Szn90a} with generalizations on the shape of each trap, the diffusion coefficient, 
and the underlying space. 
Sznitman also generalized the configuration to some Gibbsian point processes in \cite{Szn93b}. 

In this article, we discuss another model where the traps are attached around a randomly perturbed lattice. 
Namely, our process is the killed Brownian motion whose generator is 
\begin{equation}
 H_{\xi}=-\frac{1}{2}\Delta + \sum_{q \in \Z^d} W(\,\cdot-q-\xi_q), \label{ptl}
\end{equation}
where $(\xi_q)_{q \in \Z^d}$ is a collection of i.i.d.\ random vectors and $W$ is a nonnegative function 
whose support is compact and has nonempty interior. 
We allow $W$ to take the value $\infty$, which means imposing the Dirichlet boundary condition on $\set{W=\infty}$. 
If $W\equiv\infty$ on its support, the traps are said to be {\it hard}. 
The random potential in \eqref{ptl} is a model of the ``Frenkel disorder'' in solid state physics and is called 
the ``random displacement model'' 
in the theory of random Schr\"odinger operator. For such models with \textit{bounded} displacements, 
there are some results concerning the spectral properties of the generator. 
Kirsch and Martinelli \cite{KM82b} discussed the existence of band gaps 
and Klopp \cite{Klo93} proved the spectral localization in a semi-classical limit. 
More recently, Baker, Loss and Stolz \cite{BLS07} studied 
which configuration minimizes the bottom of the spectrum of \eqref{ptl}. 
On the other hand, there are few results when the displacements are \textit{unbounded}, which is the object of 
this article. 
It seems important to allow unbounded displacements from the physical viewpoint. 
For example, it is natural to take the Gaussian distribution for the displacements 
to model the defects caused by self-diffusion. 
In the future paper \cite{FU08}, we will extend the investigation to 
non-compactly supported potentials and negative potentials. 
We will also discuss in \cite{FU08} the one-dimensional result which is not discussed in the present article. 

There are at least three important aspects of the survival probability. The first is as the partition function 
for the Brownian motion conditioned to survive. Actually, some detailed studies on the surviving Brownian motion 
were developed after \cite{Szn90a}. See e.g.~\cite{Szn91b} and \cite{Pov99} for path localization results. 
The second is as the Laplace transform of the density of states. It is well known that one can derive the asymptotic 
behavior of the density of states near the bottom of the spectrum from the survival asymptotics using an exponential 
Tauberian theorem. 
See e.g.\ \cite{Fuk73}, \cite{Nak77}, and \cite{Szn90a} for this way of studies on 
the density of states. 
The last is as the solution of the parabolic Anderson problem. The quenched survival probability 
of the Brownian motion is expressed by a Feynman-Kac functional. 
From the expression, we can identify it with the solution of the heat equation associated with $H_{\xi}$. 
Therefore, the annealed asymptotics of the survival probability gives the moment asymptotics of the solution. 

Now we describe the settings precisely. Let $((\xi_q)_{q \in \Z^d}, \P_{\theta})$ ($\theta > 0$) be 
$\R^d$-valued i.i.d.\ random variables with the distribution 
\begin{equation}
 \P_{\theta}(\xi_q \in dx) = N(d,\theta) \exp\{-|x|^{\theta} \}\,dx \label{tail}, 
\end{equation}
where $N(d,\theta)$ is the normalizing constant. 
Although our proof needs such an assumption only on the tail, we assume $\xi_q$ to have the 
exact density \eqref{tail} for simplicity. 
The parameter $\theta$ controls the strength of the disorder: 
large $\theta$ implies weak disorder and small $\theta$ implies the converse. 
Given random vectors, we define the perturbed lattice by 
$\xi = \sum_{q \in \Z^d} \delta_{q+\xi_q}$ and let $V(\,\cdot\, ,\xi)$ be the random potential in \eqref{ptl}. 
We denote by $\Xi$ the sample space of $\xi$, the space of simple pure point measures on $\R^d$. 
We use the notation $((B_t)_{t \ge 0}, P_x)$ for the standard Brownian motion which is independent of $\xi$. 
The entrance time to a closed set $F$ and the exit time from an open set $U$ are denoted by $H_F$ and $T_U$, respectively. 
Then the survival probability, our main object of this article, is described as follows: 
\begin{equation*}
 S_t = \E_{\theta} \otimes E_0 \left[ \exp\set{-\int_0^t V(B_s,\xi) \,ds}\right]. 
\end{equation*}
Intuitively, this quantity seems to decay exponentially since the traps are distributed 
almost uniformly in the space. However, the decay rate should be slower than the periodic case 
since large {\it trap free regions} caused by the disorder help the Brownian survival. 

We make a remark on the starting point of the Brownian motion before stating the results. 
Since our trap field is not $\R^d$-translation invariant but $\Z^d$-shift invariant, 
the asymptotics of the survival probability may depend on the starting point. 
However,  it will be clear from the proof that all the results stated in this article do not 
depend on the starting point. For this reason, we only consider the Brownian motion starting 
from the origin.  

We discuss the long time asymptotics of $\log S_t$, instead of $S_t$ itself, in this article. 
We introduce some notations for asymptotic behaviors to state the results. 
\begin{Def}
Let $f$ and $g$ are real-valued functions of a real variable 
and $\ast = 0$ or $\infty$. Then
\begin{equation*}
 f(x) \asymp g(x) \quad {\rm as} \quad x \to \ast 
\end{equation*}
means that there exists a constant $C>0$ such that
\begin{equation}
 C^{-1} g(x) \le f(x) \le C g(x) 
\end{equation}
when $x$ is sufficiently close to $\ast$. Similarly, 
\begin{equation}
 f(x) \sim g(x) \quad {\rm as} \quad x \to \ast 
\end{equation}
means that $\lim_{x \to \ast} f(x)/g(x) = 1$. 
\end{Def}
We are now ready to state our main result. 
\begin{thm}\label{thm1}
For any $\theta > 0$, we have 
 \begin{equation*}
  \begin{split}
   \log S_t \asymp 
   \left\{
   \begin{array}{lr}
    - t^{\frac{2+\theta}{4+\theta}} (\log t)^{-\frac{\theta}{4+\theta}} &\quad (d=2),\\[8pt]
    -t^{\frac{d^2+2\theta}{d^2+2d+2\theta}}  &\quad (d \ge 3),
   \end{array}\right.
  \end{split}
 \end{equation*}
as $t \to \infty$.  
\end{thm}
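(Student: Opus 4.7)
The plan is to prove the stated asymptotics by establishing matching upper and lower bounds on $-\log S_t$, in the spirit of Donsker--Varadhan and Sznitman.

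For the lower bound I would use a ``clear-and-confine'' strategy. Fix a scale $R$ to be optimized, and consider the event $A_R \subset \Xi$ under which the principal Dirichlet eigenvalue $\lambda_1(H_\xi;B_R) \lesssim R^{-2}$. Restricting the Brownian motion to stay inside $B_R$ and using Feynman--Kac with the lowest Dirichlet eigenvalue of the restricted operator gives
\begin{equation*}
 S_t \gtrsim \P_\theta(A_R) \cdot P_0(T_{B_R}>t) \gtrsim \P_\theta(A_R)\exp(-c\,t/R^2).
\end{equation*}
The key observation in $d \ge 3$ is that $B_R$ need not be entirely trap-free: a capacity / first-order perturbation estimate shows that up to $\sim R^{d-2}$ traps may remain inside without spoiling $\lambda_1 \lesssim R^{-2}$. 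The cheapest $\xi \in A_R$ clusters the $\sim R^d$ lattice points of $B_R$ into $\sim R^{d-2}$ ``super-traps'' positioned on a sublattice of spacing $L \sim R^{2/d}$, merging the $L^d$ points of each sublattice cell at the cell's center. Since each displacement has length $\lesssim L$, the log-cost satisfies $\sum_{q\in B_R\cap\Z^d}|\xi_q|^\theta \lesssim R^d\cdot L^\theta = R^{d+2\theta/d}$, so $\P_\theta(A_R) \gtrsim \exp(-c\,R^{d+2\theta/d})$. Optimizing gives $R \sim t^{d/(d^2+2d+2\theta)}$ and reproduces the stated lower bound. In $d=2$ the tolerance drops to $\sim \log R$ traps because the capacity has logarithmic scaling; this shifts the optimal spacing to $L \sim R/\sqrt{\log R}$ and, after optimization, produces the $(\log t)^{-\theta/(4+\theta)}$ correction.

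For the upper bound I would invoke the method of enlargement of obstacles to reduce to the hard-trap case, then coarse-grain: partition $\R^d$ into mesoscopic boxes of side comparable to the optimal $R$ and use a spectral decomposition to argue that survival up to time $t$ forces the Brownian motion to localize inside a single box whose trap configuration has an anomalously small principal Dirichlet eigenvalue. The annealed cost of producing such a box is controlled by the matching Lifshitz-tail upper bound
\begin{equation*}
 \P_\theta(\lambda_1(H_\xi;B_R) \le c\,R^{-2}) \le \exp(-c'\,R^{d+2\theta/d}) \qquad (d \ge 3),
\end{equation*}
which the sublattice-clustering construction above saturates. Combining this with the entropic cost of choosing which box contains the clearing and optimizing in $R$ delivers the matching upper bound on $S_t$.

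The main obstacle is this Lifshitz-tail estimate: one must rule out any cheaper mechanism for producing a small eigenvalue than the sublattice-clustering strategy. A natural route is to combine a Faber--Krahn / capacity inequality bounding $\lambda_1(H_\xi;B_R)$ from below in terms of the capacity of the trap set with an optimal-transport type lower bound for $\sum_q |\xi_q|^\theta$ in terms of the empirical measure $\sum_q \delta_{q+\xi_q}$; together these would force any admissible displacement configuration to pay at least $R^{d+2\theta/d}$. The $d=2$ case needs additional care in propagating the logarithmic capacity through the spectral inequalities, which is exactly where the $\log t$ factor in the statement enters.
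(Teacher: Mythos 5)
Your lower bound construction is essentially the one the paper uses, described from the lattice-point side rather than the trap-free-region side: merging the $\sim L^d$ points of each sublattice cell of spacing $L\sim R^{2/d}$ onto the cell centre is exactly the ``punched domain'' \eqref{punched} in the constant-capacity regime $\delta(r)=\delta_c(r)$, and your displacement cost $R^d L^\theta = R^{d+2\theta/d}$ agrees with $r^{d+\theta}\delta_c(r)^\theta$ after Lemma~\ref{lem1}. Your $d=2$ modification ($L\sim R/\sqrt{\log R}$) also matches $\delta_c(r)=(\log r)^{-1/2}$. So that half of the argument is sound; one does need a rigorous version of the ``first-order perturbation estimate'' (the paper invokes Rauch--Taylor / Ben-Ari / Simon's Theorem 22.1 in Appendix~B) to get $\lambda_1\lesssim R^{-2}$.

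The genuine gap is the upper bound, and it is precisely the ``main obstacle'' you name but do not resolve. Your sketch (Faber--Krahn or capacity lower bound for $\lambda_1$, coupled with an optimal-transport lower bound for $\sum_q|\xi_q|^\theta$) is not how the paper proceeds, and I doubt it closes easily. Faber--Krahn compares $\lambda_1$ with the volume of the clearing, which would only reproduce the Poissonian exponent $d/(d+2)$; the extra $\theta$-dependence comes from the fact that the optimal clearing is \emph{not} trap-free but perforated at the constant-capacity spacing, and one must show the eigenvalue blows up when the holes are sub-critically spaced. This is Proposition~\ref{prop3}, proved by patching the local Neumann eigenvalue asymptotics of Ben-Ari (Lemma~\ref{lem2}) over all $\e\delta_c(r)$-boxes that straddle the boundary of the trap support; it is a perturbation/capacity argument but much sharper than Faber--Krahn. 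Moreover, the probabilistic cost is not controlled through $\sum_q|\xi_q|^\theta$ for individual configurations but through the emptiness probability $\P_\theta(\xi(U^c)=0)\asymp\exp\{-\int_{U^c} {\rm d}(x,\partial U)^\theta dx\}$ (Lemma~\ref{lem1}), and the union bound over trap shapes $U$ is what the method of enlargement of obstacles plus the density-box coarse graining (Propositions~\ref{prop4}--\ref{prop5}) makes tractable: it reduces $\#\mathcal{S}$ from $\sim 2^{t^d}$ to a sub-exponential $\#\underline{\mathcal{S}}_r$ as in \eqref{complexity}. Without these ingredients you have a heuristic for the exponent, not a proof that no cheaper mechanism for a small eigenvalue exists.
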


Our result says that the survival probability decays faster than 
in the Poissonian case where $\log S_t \sim -ct^{d/(d+2)}$ (cf.~\cite{DV75c}). This implicitly implies that the perturbed lattice is more 
\emph{ordered} than the Poisson point process.  
Furthermore, we have the following simple but interesting observations. 
\begin{rem}\label{disorder} (\textit{weak and strong disorder limits})\\
Concerning the power of $t$ in Theorem \ref{thm1}, we have the following: 
\begin{enumerate}
\item{As $\theta \nearrow \infty$, the power $\frac{d^2+2\theta}{d^2+2d+2\theta} \nearrow 1$, 
 which is the same as for the periodic traps.}
\item{As $\theta \searrow 0$, the power $\frac{d^2+2\theta}{d^2+2d+2\theta} \searrow \frac{d}{d+2}$, 
 which is the same as for the Poissonian traps.}
\end{enumerate}
On the other hand, a logarithmic correction remains in the case $d=2$ and $\theta \uparrow \infty$, 
which we do not have for the periodic traps. 
\qed \end{rem}
This remark says that our model can be regarded as an interpolation between a perfect crystal and 
a completely disordered medium. 
We will also show the similar results concerning the convergence of point processes 
in Appendix~\ref{A}. 
\begin{rem}
The perturbed lattice has another interesting aspect in the two-dimensional case. 
Let $Z_{\C}$ be the flat chaotic analytic zero points (CAZP), that is, the zero points of the Gaussian entire function 
$f_{\C}(z) = \sum_{n=0}^{\infty} a_n z^n/\sqrt{n!}$ where $(a_n)_{n = 0}^{\infty}$ is 
a collection of i.i.d.\ standard complex Gaussian variables. 
Sodin and Tsirelson \cite{ST06} proved that there exists a collection of random variables $(\zeta_q)_{q \in \Z^2}$ 
such that $\sum_{q \in \Z^2}\delta_{\sqrt{\pi}q + \zeta_q}$ has the same distribution as $Z_{\C}$. 
Though $(\zeta_q)_{q \in \Z^2}$ is not an independent family, it is invariant under lattice shifts and 
the distribution of each $\zeta_q$ has a Gaussian upper bound for the tail. 
Therefore, our model with the parameter $\theta=2$ can be regarded as a toy model 
for the flat CAZP. Indeed, Sodin and Tsirelson called our model ``the second toy model'' in \cite{ST06}. 
\qed \end{rem}

Let us briefly explain the construction of the article. 
We prove Theorem \ref{thm1} in Section \ref{proof}. 
Our strategy to prove the survival asymptotics is based on the idea in \cite{Szn90a, Szn98} 
rather than the one in \cite{DV75c}. 
The first step is a reduction to a certain variational problem. 
In this step, we use a coarse graining method which is a slightly altered version of Sznitman's 
``method of enlargement of obstacles''. 
The second step is the analysis of the variational problem. 
However, we reverse the order and analyze the variational problem first 
since it gives the correct scale which we need in the coarse graining. 
In Section \ref{applications}, we give two applications of the survival asymptotics. 
The first is the Lifshitz tail effect on the density of states of $H_{\xi}$, which says 
that the spectrum of $H_{\xi}$ is exponentially thin around the bottom (cf.~\cite{Lif65}). 
The second is a quantitative estimate for the strength of intermittency for the solution of 
the parabolic Anderson problem associated with $H_{\xi}$. 

\section{Proof of the survival asymptotics}\label{proof}
\subsection{Rough procedure}\label{outline}
We explain the rough procedure of the proof in this section. 
First of all, we slightly modify the random potential as follows: 
\begin{equation}
 V(x,\xi) = \sum_{q \in \Z^d} h \cdot 1_{\{\xi(C(\e q,\epsilon)) \ge 1\}} 1_{C(\e q,L)}(x)
 +\infty \cdot 1_{\T^c}(x), \label{mptl}
\end{equation}
where $C(y,l) = y+[-l/2,l/2]^d$ and $\T = (-t/2,t/2)^d$. 
This new potential bounds the original one from both above and below in $\T$ by taking small $\epsilon$ 
and varying $h \in (0,\infty]$ and $L>0$. 
Moreover, the restriction on $\T$ does not affect the results since $P_0(T_{\T} \le t)$ 
decays exponentially in $t$. 
Therefore it is sufficient to prove the survival asymptotics for the modified potential \eqref{mptl}. 
Hereafter we take $\epsilon, h, L = 1$ so that $V(x,\xi) = 1_{{\rm supp}V(\,\cdot, \, \xi)}(x)$ almost everywhere in $\T$, 
for simplicity. 
We start with the following obvious lower and upper bounds. \\[10pt]
\noindent
\textit{Lower bound}: Let $\S$ be the set of possible shapes of ${\rm supp}\,V(\,\cdot, \xi)_{\xi \in \Xi}$. 
Then, 
\begin{equation*} 
 S_t \ge \sup_{U \in \mathcal{S}} \P_{\theta}(\xi(U^c)=0) 
 E_0\left[\exp \set{-\int_0^t 1_U(B_s) \,ds};T_{\mathcal{T}} > t\right]. \label{lower}
\end{equation*}

\noindent
\textit{Upper bound}: By summing over $U \in \mathcal{S}$, we obtain
\begin{equation*}
 \begin{split}
  S_t &\le\, \sum_{U \in \mathcal{S}} \P_{\theta}(\xi(U^c)=0) E_0\left[\exp \set{-\int_0^t 1_U(B_s) \,ds}\right]\\
  &\le\, \# \mathcal{S}  \sup_{U \in \mathcal{S}} \P_{\theta}(\xi(U^c)=0)
  E_0 \left[\exp \set{-\int_0^t 1_U(B_s) \,ds}; T_{\mathcal{T}} > t\right]. \label{upper}
 \end{split}
\end{equation*}

Here we have $\# \mathcal{S} < \infty$ thanks to above modification and therefore the upper bound makes sense. 
However, there still remains a problem since we have too many configurations: $\# \mathcal{S} \sim 2^{t^d}$. 
We shall remedy this situation by reducing $\# \mathcal{S}$ to the small order using a coarse graining method. 
Once $\# \S$ is shown to be negligible, the proof of the survival asymptotics is reduced to the analysis of 
the variational problem
\begin{equation}
 \sup_{U \in \mathcal{S}} \P_{\theta}(\xi(U^c)=0)E_0 \left[\exp \set{-\int_0^t 1_U(B_s) \,ds}; T_{\mathcal{T}} > t\right].
 \label{vp1}
\end{equation} 
As we announced in the introduction, we shall analyze this variational problem in Section \ref{variational} and 
give the coarse graining scheme in Section \ref{MEO}. Finally, we shall patch them together in Section \ref{patching} 
to complete the proof. 
\begin{rem}
For $\log S_t$ with the above modified potential, 
we can derive a finer asymptotics than Theorem \ref{thm1}. 
We shall state this in Section \ref{patching} (Theorem  \ref{thm3}) since it requires the notation defined in the proof. 
\qed \end{rem}
%
\subsection{Analysis of the variational problem}\label{variational}
In this section, we analyze the variational problem \eqref{vp1} and find the correct scale. 
Firstly, it is well known that the Brownian expectation part is controlled by the principal eigenvalue $\lambda_1(U)$ 
of the Dirichlet-Schr\"{o}dinger operator $-1/2\Delta + 1_U$ in $\T$: 
\begin{equation*}
 \log E_0 \left[\exp \set{-\int_0^t 1_U(B_s) \,ds}; T_{\mathcal{T}} > t\right] \sim -\lambda_1(U)t 
 \quad \textrm{as}\quad t \to \infty, \label{spectral}
\end{equation*}
for fixed $U$. Let us assume for the moment that this relation holds uniformly 
in $U \in \S$. We will give a rigorous argument in Section \ref{patching}. 
On the other hand, we use the following lemma to control the \emph{emptiness probability}, 
that is, the probability of the perturbed lattice putting no point in a region. 
\begin{lem}\label{lem1}
 If $\{U_v\}_{v > 0} \subset \mathcal{S}$ satisfies 
 $\int_{U_v^c} {\rm d}(q, \partial U_v)^{\theta} dx/|U_v^c| \to \infty$ as $v \to \infty$, 
 then we have 
 \begin{equation}
  \log \P_{\theta}(\xi(U_v^c)=0) \sim -\int_{U_v^c} {\rm d}(x, \partial U_v)^{\theta} dx 
  \quad {\rm as} \quad v \to \infty, \label{hole}
 \end{equation}
 where ${\rm d}(\cdot, \cdot)$ denotes the Euclidean distance. 
\end{lem}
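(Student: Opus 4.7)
The plan is to exploit the independence of $(\xi_q)_{q\in\Z^d}$ to reduce the problem to a single-site estimate. By independence,
\[
\log \P_{\theta}(\xi(U_v^c)=0) = \sum_{q \in \Z^d} \log \P_{\theta}(q + \xi_q \in U_v).
\]
I would split the sum according to whether $q$ lies in $U_v^c$ or $U_v$ and show that the main contribution comes from $q \in U_v^c \cap \Z^d$.

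For such $q$, setting $r_q = {\rm d}(q, \partial U_v) = {\rm d}(q, U_v)$, the probability $\P_{\theta}(q + \xi_q \in U_v)$ is an integral of $N(d,\theta)\exp(-|y|^\theta)$ over $U_v - q$, a set at distance $r_q$ from the origin. An upper bound by integration over $\{|y| \geq r_q\}$ yields $\P_{\theta}(q+\xi_q\in U_v) \leq c_1 r_q^{d-\theta}\exp(-r_q^\theta)$ for $r_q$ large. For a matching lower bound I use that $U_v \in \S$ is a union of unit cubes: it contains a unit cube whose center lies within $r_q + \sqrt{d}/2$ of $q$, and integrating the density over this cube yields $\P_{\theta}(q+\xi_q\in U_v) \geq c_2 \exp(-(r_q+\sqrt{d})^\theta)$. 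The two bounds combine into
\[
\log \P_{\theta}(q + \xi_q \in U_v) = -r_q^\theta(1+o(1)) \quad \text{as } r_q \to \infty,
\]
uniformly in $U_v \in \S$.

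For $q \in U_v \cap \Z^d$, the probability $\P_{\theta}(q + \xi_q \in U_v)$ is close to $1$ and Taylor expansion gives $\log \P_{\theta}(q + \xi_q \in U_v) = -\P_{\theta}(q + \xi_q \in U_v^c)(1+o(1))$; the same tail estimate yields $\P_{\theta}(q + \xi_q \in U_v^c) \leq c_1 r_q^{d-\theta}\exp(-r_q^\theta)$ with $r_q = {\rm d}(q, U_v^c) = {\rm d}(q, \partial U_v)$, so the total contribution from $q \in U_v$ is at most $O(|\partial U_v|) = O(|U_v^c|)$, which is $o(\int_{U_v^c} {\rm d}(x,\partial U_v)^\theta\,dx)$ by the hypothesis.

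Finally, I would approximate the discrete sum $\sum_{q \in U_v^c \cap \Z^d} r_q^\theta$ by $\int_{U_v^c} {\rm d}(x, \partial U_v)^\theta\,dx$ via a Riemann-sum argument, using the $1$-Lipschitz property of $x \mapsto {\rm d}(x, \partial U_v)$ and the fact that $U_v^c$ is a union of unit cubes with lattice-point centers. The main obstacle is to control the $o(1)$ in the single-site asymptotic uniformly as it enters the sum: for lattice sites with moderate $r_q$ the asymptotic fails to apply. I would handle this by introducing a threshold $R(v) \to \infty$ slowly enough that the ``shallow'' sites $\{r_q \leq R(v)\}$ contribute in absolute value at most $R(v)^\theta |U_v^c| = o(\int_{U_v^c} {\rm d}(x,\partial U_v)^\theta\,dx)$ by the hypothesis, while on the ``deep'' sites the $o(1)$ is uniformly small and the Riemann-sum error can similarly be absorbed into the main term.
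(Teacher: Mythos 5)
Your proposal is correct and follows essentially the paper's route: decompose by independence into single-site log-probabilities, apply the one-site tail estimate $\log\P_\theta(q+\xi_q\in U_v)\sim -{\rm d}(q,\partial U_v)^\theta$ for the deep sites in $U_v^c$, show the sites in $U_v$ and the shallow sites contribute only $O(|U_v^c|)=o\bigl(\int_{U_v^c}{\rm d}(x,\partial U_v)^\theta dx\bigr)$, and convert the lattice sum to the integral. The points you flag as needing care are precisely the ones the paper handles, via the counting bound $\#\{q\in U_v\cap\Z^d:{\rm d}(q,\partial U_v)\le M\}\le(cM)^d|U_v^c|$ together with the uniform bound $\P_\theta(q+\xi_q\in U_v)\ge\P_\theta(|\xi_q|<1/2)>0$ for small $r_q$, and a $(1-\e)$-tradeoff in the tail integral to absorb both the polynomial prefactors and the sum-to-integral error, which for $\theta>1$ grows like $r_q^{\theta-1}$ per cube rather than being bounded, so your slowly diverging threshold $R(v)$ is genuinely needed there as well.
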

\begin{proof}
 Let $\e \in (0,1)$ and $U \in \S$ be fixed. Note that $|U^c|<\infty$ since $U^c$ is contained in $\T$. 
 
 For the upper bound, we consider the probability of a necessary condition: 
 \begin{equation}
  \begin{split}
   &\,\P_{\theta}\left(|\xi_q| > {\rm d}(q, \partial U) \textrm{ for all } q \in U^c \cap \Z^d \right)\\
   =&\prod_{q \in {U^c} \cap \Z^d}\int_{|x|>{\rm d}(q, \partial U)}N(d,\theta) \exp\bigl\{-|x|^{\theta}\bigr\}\,dx\\
   =&\prod_{q \in {U^c} \cap \Z^d} \sigma_{d} \int_{{\rm d}(q, \partial U)}^{\infty}
    N(d,\theta) r^{d-1} \exp\bigl\{-r^{\theta}\bigr\}\,dr\\
   \le &\prod_{q \in {U^c} \cap \Z^d} M_1(\e) \int_{{\rm d}(q, \partial U)}^{\infty}
    (1-\e) \theta r^{\theta-1} \exp\bigl\{-(1-\e)r^{\theta}\bigr\}\,dr\\
   =&\, M_1(\e)^{\# U^c \cap \Z^d} \exp\biggl\{-(1-\e)\sum_{q \in U^c \cap \Z^d} {\rm d}(q, \partial U)^{\theta}\biggr\}.
   \label{Lem1-upper}
  \end{split}
 \end{equation}
 Here $\sigma_{d}$ is the surface area of the unit sphere in $\R^d$ and 
 \begin{equation*}
  M_1(\e)= \frac{N(d,\theta) \sigma_{d}}{(1-\e)\theta} \sup_{r>1/2} r^{d-\theta} \exp\bigl\{ -\e r^{\theta} \bigr\}<\infty. 
 \end{equation*}
 We can replace the sum in the last line of \eqref{Lem1-upper} by the integral by making $M_1(\e)$ larger since
 \begin{equation*} 
  \sup_{U \in \mathcal{S} ,\, q \in U^c \cap \Z^d} 
  \Bigl\{\int_{C(q,1)}{\rm d}(x, \partial U)^{\theta}dx - {\rm d}(q, \partial U)^{\theta}\Bigr\} < \infty. 
 \end{equation*}
 Therefore we obtain 
 \begin{equation*}
  \P_{\theta}(\xi(U^c)=0) \le M_1(\e)^{|U^c|}
  \exp\Bigl\{-(1-\e) \int_{U^c} {\rm d}(x, \partial U)^{\theta} \,dx \Bigr\} 
 \end{equation*}
 for arbitrary $\e > 0$ and the upper bound follows. 

 For the lower bound, we consider a sufficient condition: 
\begin{equation}
 \begin{split}
  \P_{\theta}(\xi(U^c)=0) 
  \ge &\, \prod_{q \in U^c \cap \Z^d} \P_{\theta}(q+\xi_q \in \textrm{a nearest }C(q',1) \not\subset U^c)\\
  & \quad \times \prod_{q \in \Z^d \cap U ; \, {\rm d}(q, \partial U) \le M_2} \P_{\theta}(q+\xi_q \in C(q,1))\\
  & \qquad \times \prod_{q \in \Z^d \cap U ; \, {\rm d}(q, \partial U) > M_2} \P_{\theta}(q+\xi_q \notin U^c). 
  \label{Lem1-lower}
 \end{split}
\end{equation}
The first factor of the right-hand side is bounded below by 
\begin{equation}
 \begin{split}
  &c_0(d,\theta)^{|U^c|} \exp\biggl\{- \sum_{q \in U^c \cap \Z^d} {\rm d}(q, \partial U)^{\theta}\biggr\}\\
  \ge &\, \exp\Bigl\{-\int_{U^c} {\rm d}(x, \partial U)^{\theta} dx - |U^c|\cdot|\log c_0(d,\theta)|\Bigr\}
  \label{Lem1-1st}
 \end{split}
\end{equation} 
for some constant $c_0(d,\theta)>0$. For instance, it suffices to take $c_0(d,\theta)$ as
\begin{equation*}
\begin{split}
 (N(d,\theta)\wedge 1) \inf\{&\exp\{{\rm d}(x_1, y_1)^{\theta}-{\rm d}(x_2, y_2)^{\theta}\};\\
 &q, q' \in \Z^d, x_1 ,x_2 \in C(q,1), y_1, y_2 \in C(q',1) \}.
\end{split}
\end{equation*}
Next, the second factor is bounded below by
\begin{equation}
 \P_{\theta}(q+\xi_q \in C(q,1))^{(3M_2)^d|U^c|},\label{Lem1-2nd}
\end{equation}
since we have $\# \{q \in \Z^d \cap U ; \, {\rm d}(q, \partial U) \le M_2 \} \le (3M_2)^d |U^c|$ 
by considering the $M_2$-neighborhood of each unit cube contained in $U^c$. 
Before proceeding the estimate for the third factor, we recall that we have shown in \eqref{Lem1-upper} that 
\begin{equation*}
 \P_{\theta}(q+\xi_q \in U^c) \le M_1(\e)\exp\bigl\{-(1-\e){\rm d}(q, \partial U)^{\theta}\bigr\}
\end{equation*}
for any $\e>0$ and $q \in U$. 
Now, if we pick some $\e_0 \in (0,1)$ (e.g.\ $\e_0=1/2$) and take $M_2$ so large that 
\begin{equation*}
 M_1(\e_0)\exp\set{-(1-\e_0) (M_2-1)^{\theta}} < 1, 
\end{equation*}
then the third factor is bounded below by
\begin{equation}
 \begin{split}
  &\prod_{n \ge M_2} \prod_{q \in \Z^d; \, n-1 \le {\rm d}(q,\partial U) <n} 
  \left( 1-M_1(\e_0) \exp \set{-(1-\e_0) (n-1)^{\theta}} \right)\\
  \ge & \prod_{n \ge M_2} \left( 1-M_1(\e_0) \exp \set{-(1-\e_0) (n-1)^{\theta}} \right)^{(2n+1)^d|U^c|}\\
  \ge & \bigg(\prod_{n \ge M_2} \left( 1-M_1(\e) (2n+1)^d \exp \set{-(1-\e_0) (n-1)^{\theta}} \right)\biggr)^{|U^c|}, 
  \label{Lem1-3rd}
 \end{split}
\end{equation}
where we have used $\# \{q \in \Z^d; \, n-1 \le {\rm d}(q,\partial U) <n \} \le (2n+1)^d|U^c|$ in the second line 
and the elementary inequality $(1-x)^m \ge 1-mx$ ($x \in [0, 1]$, $m \in \N$) in the last line. 
Note that the infinite product in the third line is convergent. 

Combining \eqref{Lem1-lower}--\eqref{Lem1-3rd}, we obtain 
 \begin{equation*}
  \P_{\theta}(\xi(U^c)=0) \ge 
  \exp\Bigl\{- \int_{U^c} {\rm d}(x, \partial U)^{\theta} \,dx - c_0'(d,\theta)|U^c| \Bigr\}, 
 \end{equation*}
which shows the lower bound. 
\end{proof}
\noindent
Lemma \ref{lem1} says that \eqref{hole} holds for a large class of families in $\S$. 
In fact, we shall prove in Proposition \ref{prop6} that the family of sets 
which are \emph{relevant} in our analysis satisfies the assumption of Lemma \ref{lem1}. 
If we assume that 
 \begin{equation}
  \log \P_{\theta}(\xi(U^c)=0) = -\int_{U^c} {\rm d}(x, \partial U)^{\theta} dx 
 \end{equation}
holds together with \eqref{spectral} for all $U \in \S$, we can rewrite our variational problem as
\begin{equation} 
 \begin{split}
  &\,\log \sup_{U \in \S} \P_{\theta}(\xi(U^c)=0)E_0 \left[\exp \set{-\int_0^t 1_U(B_s) \,ds}; T_{\mathcal{T}} > t\right]\\
  & \sim \, -\inf_{U \in \S}\set{ \lambda_1(U)t +  
  \int_{U^c} {\rm d}(x, \partial U)^{\theta}dx }.\label{vp2}
 \end{split}
\end{equation}
It is easy to see that the infimum of \eqref{vp2} is attained when $U^c$ is large for large $t$. 
Thus, it is convenient to introduce a scaling $U=rU_r$ by a factor $r>0$.  
Under this scaling, the right-hand side of \eqref{vp2} takes the form
\begin{equation}
\begin{split}
 &-\inf_{U_r \in \S_r}\set{\lambda_1^r(U_r)tr^{-2} + r^{d+\theta} 
 \int_{U_r^c} {\rm d}(x, \partial U_r)^{\theta} \,dx}\\
 &=-tr^{-2}\inf_{U_r \in \S_r}\set{\lambda_1^r(U_r) + \frac{r^{d+\theta}}{tr^{-2}}
 \int_{U_r^c} {\rm d}(x, \partial U_r)^{\theta} \,dx}.\label{vp3}
\end{split}
\end{equation}
Here $\S_r=\set{r^{-1}U ; U \in \S}$ and $\lambda_1^r(U_r)$ is the principal eigenvalue of 
the scaled Dirichlet-Schr\"{o}dinger operator $-1/2\Delta+r^2 1_{U_r}$ in $\T_r=r^{-1}\T$. 

Let us summarize the status. We have shown that 
\begin{equation}
 \log S_t \sim -tr^{-2}\inf_{U_r \in \S_r}\set{\lambda_1^r(U_r) + \frac{r^{d+\theta}}{tr^{-2}}
 \int_{U_r^c} {\rm d}(x, \partial U_r)^{\theta} \,dx}\label{vp4} 
\end{equation} 
for any $r>0$ under the three assumptions: the first is on the coarse graining step ($\# \S$ is negligible) 
and the second and third are that \eqref{spectral} and \eqref{hole} respectively hold for 
$U \in \S$ in some uniform manners. 
The first one will be verified in Section \ref{MEO} and the second and third ones in Section \ref{patching}. 

Now, if we can find a scale $r=r(t)$ for which the infimum in \eqref{vp4} stays bounded both above and below 
by positive constants as $t \to \infty$, then $tr^{-2}$ gives the asymptotic order of $\log S_t$. 
It might seem natural to take $r=t^{1/(d+\theta+2)}$, to satisfy $r^{d+\theta}/tr^{-2}=1$, at the first sight. 
However, this scale gives a wrong magnitude $t^{(d+\theta)/(d+\theta+2)}$. 
The key to finding the correct scale is that we can easily decrease the value of 
the integral $\int_{U_r^c} {\rm d}(x, \partial U_r)^{\theta} dx$. 
For instance, consider a domain with many tiny holes
\begin{equation}
 U_r^c = (-n,n)^d \setminus \bigcup_{q \in \Z^d}C(\delta(r)q , r^{-1}). \label{punched}
\end{equation}
with $n \in \N$ and $\delta(r) \to 0$ as $r \to \infty$. 
Then we have 
\begin{equation*}
 \int_{U_r^c} {\rm d}(x, \partial U_r)^{\theta} dx \asymp \delta(r)^{\theta},
\end{equation*} 
which goes to 0 as $r \to \infty$. 
Although such holes generally increase the principal eigenvalue $\lambda_1^r(U_r)$, 
it is known that we can take $\delta(r)$ small to some extent while keeping the control of $\lambda_1^r(U_r)$. 
Indeed, Rauch and Taylor \cite{RT75} proved that for this specific example with the hard traps (i.e.\ $h = \infty$), 
\begin{equation}
 \begin{split}
  \delta_c(r)=
  \left\{
  \begin{array}{lr}
   \smash[t]{( \log r )^{-\frac{1}{2}}} &(d=2),\\[5pt]
   \smash[t]{r^{-\frac{d-2}{d}}}  &(d \ge 3),
  \end{array}\right.\label{ccr}
 \end{split}
\end{equation}
are the critical intervals in the following sense: 
\begin{enumerate}
\item{If $\lim_{r \to \infty}\delta(r)/\delta_c(r)=0$, 
then $\lim_{r \to \infty}\lambda_1(U_r) = \infty$.}
\item{If $\lim_{r \to \infty}\delta(r)/\delta_c(r)=\infty$, 
then $\lim_{r \to \infty}\lambda_1(U_r) = \lambda_1(([-n,n]^d)^c)$.}
\end{enumerate}
Therefore, we have to take the scale $r$ at least so large as to satisfy 
$r^{d+\theta}/tr^{-2}=\delta_c(r)^{-\theta}$. 
Otherwise, we find that the infimum in \eqref{vp4} goes to zero as $r \to \infty$, 
by considering the domain \eqref{punched} with a large $n$ and an appropriate $\delta(r)$. 
The next proposition, a generalization of the above criticality, 
shows that the infimum is actually bounded below for this choice of the scale. 
\begin{prop}\label{prop3}
 There exists a function $M_2(\e) \to \infty$ $(\e \to 0)$ such that
 if $U_r \subset \S_r$ satisfies
 \begin{equation}
  \# \set{q \in U_r^c \cap \frac{1}{r}\Z^d;\, {\rm d}(q, \partial U_r) \ge \e \delta_c(r) } < \e r^d \label{ass}, 
 \end{equation}
 then $\lambda_1^r(U_r) > M_2(\e)$. 
 In particular, we have 
 \begin{equation*}
  \inf_{r \ge 1,\, U_r \in \S_r}\set{\lambda_1^r(U_r) + 
  \delta_c(r)^{-\theta}\int_{ U_r^c } {\rm d}(x, \partial U_r)^{\theta}dx} > 0. \label{Prop3}
 \end{equation*}
\end{prop}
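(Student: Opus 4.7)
The proposition generalizes the Rauch--Taylor criticality \eqref{ccr} to arbitrary configurations $U_r \in \S_r$, and my plan is to adapt Sznitman's spectral control technique (\cite[Chapter~4]{Szn98}) to translate the hypothesis on lattice points into a capacity lower bound, and then into an eigenvalue lower bound.

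First I would localize by partitioning $\R^d$ into unit cubes $\{C_q : q \in \Z^d\}$, each containing of order $r^d$ lattice points of $r^{-1}\Z^d$. Call a cube \emph{good} if every lattice point $p \in C_q \cap U_r^c$ satisfies ${\rm d}(p, \partial U_r) < \epsilon\delta_c(r)$; by the hypothesis the number of bad cubes is controlled. In a good cube, since $U_r$ is a union of $r^{-1}$-cubes centered on $r^{-1}\Z^d$ and every lattice point is either in $U_r$ or within $\epsilon\delta_c(r)$ of it, every sub-cube of $C_q$ of side $2\epsilon\delta_c(r)$ contains at least one $r^{-1}$-cube of $U_r$. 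For such a sub-cube $S$, the classical inequality relating principal eigenvalue to capacity (see \cite{Szn98}) gives
\[
 \lambda_1^{r,N}(U_r;S) \;\ge\; c\,\frac{{\rm cap}_S(U_r \cap S)}{|S|}.
\]
Using ${\rm cap}_S$ of an $r^{-1}$-cube of order $r^{2-d}$ for $d\ge 3$ (resp.\ $(\log r)^{-1}$ for $d = 2$), $|S| = (2\epsilon\delta_c(r))^d$, and the defining relation $\delta_c(r)^d\,r^{d-2} = 1$ (resp.\ $\delta_c(r)^2 \log r = 1$), the right-hand side is bounded below by $M(\epsilon)$ of order $\epsilon^{-d}$ (resp.\ $\epsilon^{-2}$), diverging as $\epsilon \to 0$. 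Neumann bracketing over the partition of $C_q$ into sub-cubes transfers the bound to the whole cube, and a further Neumann bracketing over the cubes $C_q$ yields the global bound $\lambda_1^r(U_r) \ge M_2(\epsilon)$ modulo the contribution from bad cubes.

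For the infimum assertion I would argue by cases: for any $U_r$, either the hypothesis of the first assertion holds for some fixed $\epsilon_0$, so $\lambda_1^r(U_r) \ge M_2(\epsilon_0)$, or there are at least $\epsilon_0 r^d$ deep lattice points. In the latter case each deep point $q$ contributes at least $c\,r^{-d}(\epsilon_0\delta_c(r))^\theta$ to $\int_{U_r^c}{\rm d}(x,\partial U_r)^\theta\,dx$, so that $\delta_c(r)^{-\theta}\int_{U_r^c}{\rm d}(x,\partial U_r)^\theta\,dx \ge c\,\epsilon_0^{1+\theta}$. The minimum of the two bounds is a positive quantity uniform in $r \ge 1$ and $U_r \in \S_r$.

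The main obstacle I anticipate is the globalization step: naive Neumann bracketing yields only $\lambda_1^r(U_r) \ge \min_q \lambda_1^{r,N}(U_r; C_q)$, which may be dictated by bad cubes. Making this precise will likely require either a finer analysis showing that the ground state cannot concentrate in the sparse bad set without paying a large Dirichlet-energy cost, or a partition-of-unity argument separating good and bad regions with a quantitative bound on the transition error.
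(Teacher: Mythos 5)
Your skeleton --- localize, classify boxes as good or bad according to whether any lattice point of $\tfrac1r\Z^d \cap U_r^c$ in the box is $\epsilon\delta_c(r)$-deep, get a local Neumann eigenvalue bound of order $\epsilon^{-d}$ from a capacity comparison in good boxes --- is in the same spirit as the paper's proof. But the step you honestly flag as the main obstacle, how to globalize past the bad boxes, is exactly the nontrivial part, and neither of the two escape routes you sketch is carried out. Naive Neumann bracketing indeed only gives $\lambda_1^r(U_r) \ge \min_q \lambda^{r,N}_1(U_r;C_q)$, so the proposal as written does not close.

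The paper resolves this not by Neumann bracketing but by working with the Rayleigh quotient of the actual Dirichlet eigenfunction $\psi_r$. It covers a neighborhood of $\partial U_r$ by overlapping boxes $C(\epsilon\delta_c(r) i, 2\epsilon\delta_c(r))$ (those meeting both $U_r$ and $U_r^c$); in each such box a local Neumann-type estimate (Lemma~\ref{lem2}, via Ben-Ari's eigenvalue asymptotics rather than a raw capacity inequality) applied to $\psi_r\big|_{C(\cdot)}$ gives the bound $c_1(d)\epsilon^{-d}$ on a local Rayleigh quotient. The overlaps are controlled by a bounded multiplicity constant $m(d)$. The real point is how to control the part of $\|\psi_r\|_2^2$ living outside the union of these boxes: by hypothesis~\eqref{ass} the uncovered part of $U_r^c$ has Lebesgue measure at most $\epsilon$, and the paper then splits into two cases on $\|\psi_r\|_\infty$. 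If $\|\psi_r\|_\infty \le \epsilon^{-1/4}$, the uncovered region contributes at most $\epsilon^{1/2}$ to the normalization $\|\psi_r\|_2^2 = 1$ and the local estimates dominate the Rayleigh quotient; if $\|\psi_r\|_\infty > \epsilon^{-1/4}$, the standard bound $\|\psi_r\|_\infty \le c_2(d)\lambda_1^r(U_r)^{d/4}$ for $L^2$-normalized principal eigenfunctions gives $\lambda_1^r(U_r) \ge c_2(d)^{-4/d}\epsilon^{-1/d}$ directly. That sup-norm dichotomy is the missing idea in your proposal; without it (or a worked-out IMS/partition-of-unity substitute with quantitative control of the transition error, which you mention but do not supply) the eigenvalue bound does not follow. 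A further small point: your claim that in a good unit cube every side-$2\epsilon\delta_c(r)$ sub-cube contains an $r^{-1}$-cube of $U_r$ needs a little care near the boundary of the unit cube and a slightly enlarged sub-cube, but that is a minor fix.

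For the ``in particular'' assertion your case split is fine and is a valid alternative to the paper's: you explicitly lower-bound the integral by counting deep lattice points, each contributing $\gtrsim r^{-d}(\epsilon_0\delta_c(r))^\theta$, yielding $\delta_c(r)^{-\theta}\int_{U_r^c}{\rm d}(x,\partial U_r)^\theta\,dx \gtrsim \epsilon_0^{1+\theta}$, while the paper phrases the same content as the dichotomy~\eqref{dichotomy}. Both work once the first part of the proposition is established.
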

\begin{proof}
We first recall that the principal eigenvalue can be expressed by the Dirichlet form 
\begin{equation}
 \lambda_1^r(U_r) = \int_{\T_r}\frac{1}{2}|\nabla \psi_r |^2(x) + r^2 1_{U_r}(x)\psi_r^2(x) \, dx\label{RR}
\end{equation}
using the associated $L^2$-normalized eigenfunction $\psi_r$. 
Our basic strategy is estimating the right-hand side by patching local estimates. 
For the local estimates, we use the following lemma. 
\begin{lem}\label{lem2}
 There exists $c_1(d)>0$ such that  for any $i \in \Z^d$, 
 $C(y,\frac{1}{r}) \subset C(\e \delta_c(r)i, \,2\e \delta_c(r))$, $\e > 0$, 
 and $\phi \in W^{1,2}(C(\e \delta_c(r)i, \, 2\e \delta_c(r)))$, we have 
 \begin{equation}
  \frac{1}{\| \phi \|_2^2 }\int_{C(\e \delta_c(r)i, \,2\e \delta_c(r))}\frac{1}{2}|\nabla \phi|^2(x) + 
  r^2 1_{C(y, \frac{1}{r}) }(x)\phi^2(x) \, dx \ge c_1(d)\e^{-d}. \label{Lem2}
 \end{equation} 
\end{lem}
\begin{proof}
Using the scaling with the factor $\e \delta_c(r)$, we can bound the right-hand side of 
\eqref{Lem2} below by 
\begin{equation*}
  (\e \delta_c(r))^{-2} \inf_{\phi \in W^{1,2}(C(i, \, 2))}
  \frac{1}{\| \phi \|_2^2 }\int_{C(i, \,2)}\frac{1}{2}|\nabla \phi|^2(x) + 
  (r \e \delta_c(r))^2 1_{C(y, \frac{1}{r \e \delta_c(r)}) }(x)\phi^2(x) \, dx.\label{Lem2-lower} 
\end{equation*}
Note that the infimum appearing in the above expression is 
the Neumann principal eigenvalue of the associated operator.  
The asymptotic behavior of the eigenvalue of this kind of operator has been studied thoroughly 
by Ben-Ari \cite{BA08} (we also refer the reader to Taylor's earlier work \cite{Tay76} for the case $d \ge 3$). 
Our situation can be found in Theorem 1.3 of \cite{BA08}, which tells us 
\begin{equation*}
\begin{split}
  \inf_{\phi \in W^{1,2}(C(i, \, 2))}
  & \frac{1}{\| \phi \|_2^2 }\int_{C(i, \,2)}\frac{1}{2}|\nabla \phi|^2(x) + 
  (r \e \delta_c(r))^2 1_{C(y, \frac{1}{r \e \delta_c(r)}) }(x)\phi^2(x) \, dx\\
  & \sim \left\{
  \begin{array}{lr}
   \smash[t]{ c(2) ( \log (r \e \delta_c(r)) )^{-1}} &(d=2),\\[5pt]
   \smash[t]{ c(d) ( r \e \delta_c(r) )^{2-d}}  &(d \ge 3). 
  \end{array}\right.
\end{split}
\end{equation*}
Recalling the definition of $\delta_c(r)$, \eqref{Lem2} follows immediately. 
\end{proof}
Now we show how to patch the local estimates. 
Let $\e > 0$ be small and $\mathcal{I}(r)$ be the collection of $i \in \Z^d$ for 
which $C(\e \delta_c(r)i,\e \delta_c(r))$ intersects both $U_r$ and $U_r^c$. 
Then, for large $r$, each $C(\e \delta_c(r)i,2\e \delta_c(r))$ $(i \in \mathcal{I}(r))$ contains 
at least one $1/r$-box $\subset U_r$. 
Therefore for all $i \in \mathcal{I}(r)$, we have 
\begin{equation}
 \frac{\int_{C(\e \delta_c(r)i, \,2\e \delta_c(r))}\frac{1}{2}|\nabla \psi_r|^2(x) + 
 r^2 1_{U_r}(x)\psi_r^2(x) \, dx}
 {\int_{C(\e \delta_c(r)i, \,2\e \delta_c(r))}\psi_r^2(x) \, dx} 
 \ge c_1(d)\e^{-d}\label{local}
\end{equation} 
by using Lemma \ref{lem2} with $\phi=\psi_r|_{C(\e \delta_c(r)i, \,2\e \delta_c(r))}$. 
Moreover, since there exists $m(d) \in \N$ such that every $x \in \R^d$ is contained in at most $m(d)$ different 
$C(\e \delta_c(r)i, 2 \e \delta_c(r))$'s, we find 
\begin{equation}
 \begin{split}
  &\int_{\T}\frac{1}{2}|\nabla \psi_r |^2(x) + r^2 1_{U_r}(x)\psi_r^2(x) \, dx\\
  \ge &\,m(d)^{-1} \sum_{i \in \mathcal{I}(r)} \int_{C(\e \delta_c(r)i, \,2\e \delta_c(r))} 
  \frac{1}{2}|\nabla \psi_r |^2(x) + r^2 1_{U_r}(x)\psi_r^2(x) \, dx. \label{num}
 \end{split}
\end{equation}
On the other hand, it is easy to see that 
\begin{equation*}
 \bigcup_{q \in U_r^c \cap \frac{1}{r}\Z^d;\, {\rm d}(q, \partial U_r) < \e \delta_c(r)}C\big(q,\frac{1}{r}\big)
 \subset \bigcup_{i \in \mathcal{I}(r)} C(\e \delta_c(r)i, 2\e \delta_c(r)) 
\end{equation*}
for large $r$. From this and the assumption \eqref{ass}, it follows 
\begin{equation*}
 \left|\T_r \setminus \bigcup_{i \in \mathcal{I}(r)} C(\e \delta_c(r)i, 2\e \delta_c(r))\right| \le \e 
\end{equation*}
when $r$ is sufficiently large. Therefore, 
\begin{equation}
  1=\| \psi _r \|_2^2 
  \le \sum_{i \in \mathcal{I}(r)} \int_{C(\e \delta_c(r)i, \,2\e \delta_c(r))}\psi_r^2(x) \, dx
  + \| \psi_r \|^2_{\infty} \e. \label{den}
\end{equation}
We consider the case $\| \psi_r \|_{\infty} \le \e^{-1/4}$ first. In this case, we have 
\begin{equation*}
  \lambda_1^r(U_r) \ge \frac{m(d)^{-1}c_1(d)\e^{-d}\sum_{i \in \mathcal{I}(r)} 
  \int_{C(\e \delta_c(r)i, \,2\e \delta_c(r))} \psi_r^2(x) \, dx}
  {\sum_{i \in \mathcal{I}(r)} \int_{C(\e \delta_c(r)i, \,2\e \delta_c(r))}\psi_r^2(x) \, dx + \e^{1/2}}
\end{equation*}
by substituting \eqref{num} and \eqref{den} into \eqref{RR} and using \eqref{local}. 
The right-hand side is greater than 
$(2m(d))^{-1}c_1(d)\e^{-d}$ when $\e \le 1/4$. 
Next, we consider the case $\| \psi_r \|_{\infty} > \e^{-1/4}$. This case is easier since 
we know the following $L^{\infty}$-bound for the $L^2$-normalized eigenfunction (see e.g. (3.1.55) of \cite{Szn98})
\begin{equation*}
 \| \psi_r \|_{\infty} \le c_2(d) \lambda^r_1(U_r)^{d/4}, 
\end{equation*}
which gives $\lambda_1^r(U_r) \ge c_2(d)^{-4/d}\e^{-1/d}$. 
Combining the estimates in the two cases, we obtain 
\begin{equation*}
 \lambda^r_1(U_r) \ge ((2m(d))^{-1}c_1(d)\e^{-d}) \wedge (c_2(d)^{-4/d}\e^{-1/d})
\end{equation*}
and the former part of the proposition is proven. 

From the former part, we find 
\begin{equation}
\begin{split}
  &\delta_c(r)^{-\theta}\int_{ U_r^c } {\rm d}(x, \partial U_r)^{\theta}dx \to 0 \quad {\rm as} \quad r \to \infty \\
  &\Longrightarrow \lambda_1^r(U_r) \to \infty \quad {\rm as} \quad r \to \infty \label{dichotomy}
\end{split}
\end{equation}
and the latter part follows immediately.
\end{proof}
This proposition tells us that the correct scale $r$ should be 
\begin{equation}
 \begin{split}
  r=
  \left\{
  \begin{array}{lr}
   {t^{\frac{1}{4+\theta}}( \log t )^{\frac{\theta}{8+2\theta}}} &(d=2),\\[5pt]
   {t^{\frac{d}{d^2+2d+2\theta}}}  &(d \ge 3), 
  \end{array}\right.\label{scale}
 \end{split}
\end{equation}
so that $r^{d+\theta}/tr^{-2} \sim \delta_c(r)^{-\theta}$ as $t \to \infty$ and thus 
\eqref{vp4} becomes 
\begin{equation}
 \log S_t \sim -tr^{-2}\inf_{U_r \in \S_r}\set{\lambda_1^r(U_r) + \delta_c(r)^{-\theta}
 \int_{U_r^c} {\rm d}(x, \partial U_r)^{\theta} \,dx}.\label{vp5}
\end{equation} 
For these scales, $tr^{-2}$ actually gives the correct magnitudes 
\begin{equation*}
 \begin{split}
  tr^{-2}=
  \left\{
  \begin{array}{lr}
   {t^{\frac{2+\theta}{4+\theta}}( \log t )^{-\frac{\theta}{4+\theta}}} &(d=2),\\[5pt]
   {t^{\frac{d^2+2\theta}{d^2+2d+2\theta}}}  &(d \ge 3). 
  \end{array}\right. 
 \end{split}
\end{equation*}
\begin{rem}
As is mentioned before, we have to assure that the infimum in \eqref{vp5} is also bounded above. 
It is possible to prove it here by considering the domain \eqref{punched} with $n=1$ and $\delta(r)=\delta_c(r)$ 
but we postpone the discussion to Appendix \ref{B} since we will have a slightly different 
variational problem after the coarse graining. 
\qed \end{rem}
%
\subsection{Coarse graining}\label{MEO}
In this section, we give the coarse graining scheme which reduces the combinatorial complexity 
of configurations by replacing dense traps by a large box-shaped hard traps. 
Throughout this section, we are dealing with the scaled traps with the correct scale 
$r$ in \eqref{scale}. The scaled configuration of points 
$\sum_q \delta_{r^{-1}(q+\xi_q)} $ is denoted by $\xi^r$. 

We take a positive number $\eta \in (0,1)$ so small as to satisfy
\begin{equation}
 \eta^2+\Bigl( \frac{d-2}{2}+\frac{\theta}{d} \Bigr)\eta < \frac{\theta}{d} \wedge \frac{1}{2} \label{eta}
\end{equation}
and let 
\begin{equation*}
 \gamma = \frac{d-2}{d}+\frac{2\eta}{d}<1. 
\end{equation*}
We further introduce some notations concerning a dyadic decomposition of $\R^d$. 
Let $\mathcal{I}_k$ be the collection of indices of the form
\begin{equation*}
 \index = (i_0, i_1, \ldots , i_k) \in \Z^d \times (\set{0, 1}^d)^k. 
\end{equation*}
We associate to the above index $\index$ a box: 
\begin{equation*}
 C_{\index} = q_{\index} + 2^{-k}[0,1]^d \;\textrm{ where }\; q_{\index} = i_0 + 2^{-1}i_1 + \cdots +2^{-k}i_k. 
\end{equation*}
For $\index \in \mathcal{I}_k$ and $k' \le k$, we define the truncation 
\begin{equation*}
 [\index]_{k'} = (i_0, i_1, \ldots , i_{k'}). 
\end{equation*}
The notation $\index \preceq \index'$ means that $\index$ is a truncation of $\index'$. 
Finally, we introduce 
\begin{equation*}
 n_{\beta}(r) = \Bigl[ \beta \,\frac{\log r}{\log 2} \Bigr] 
\end{equation*}
for $\beta > 0$ so that $2^{-n_{\beta}-1} < r^{-\beta} \le 2^{-n_{\beta}}$. 

Now we give the precise definition of the ``dense traps'' in the first paragraph.  
\begin{Def}\label{def1}
 We call $C_q$ $(q \in \Z^d)$ a \underline{\smash[b]{density}} box if all 
 $C_{\index}$'s $(\index \in \mathcal{I}_{n_{\eta\gamma}}$, $q \preceq \index)$ satisfy the following: 
 \begin{equation}
 \begin{split}
  &\textit{for at least half of }\index' \succeq \index \;(\index' \in \mathcal{I}_{n_{\gamma}}), \\
  &q_{\index'} + 2^{-n_{\gamma}-1}[0,1]^d \textit{ contains a point of }\xi^r.\label{myden}
 \end{split}
 \end{equation}
 The union of all \underline{\smash[b]{density}} boxes is denoted by $\smash[b]{\underline{\den}}_r(\xi)$. 
\end{Def}
\begin{rem}
We use $2^{-n_{\gamma}-1}$ instead of $2^{-n_{\gamma}}$ in the definition 
to have \emph{separated} traps. The role of this choice will be clear in the proof 
of Proposition \ref{prop4} (see \eqref{each}). 
\qed \end{rem}
\noindent
In \cite{Szn98}, Sznitman defined density boxes in a different way and proved that they can be replaced by hard traps. 
We shall prove that our \underline{\smash[b]{density}} set is a subset of Sznitman's one 
to use the result in \cite{Szn98}. 
We start by recalling Sznitman's definition of the density set and a result on the principal eigenvalue. 
For $\index \in \mathcal{I}_k$, the skeleton of the traps is defined by 
\begin{equation*}
 K_{\index} = 2^k \bigg( \bigcup_{x \in C_{\index}\cap \,{\rm supp}\,\xi^r} \overline{B}(x, \sqrt{d}/r) \bigg). 
\end{equation*} 
Sznitman defined the density box as follows:
\begin{Def}\label{def2} {\rm (pp.\ 150-152} {\rm in} {\rm \cite{Szn98})}
 $C_{\index}$ $(\index \in \mathcal{I}_{n_{\gamma}})$ is called a density box if the 
 quantitative Wiener criterion:  
 \begin{equation}
  \sum_{1 \le k \le n_{\gamma}} {\rm cap}(K_{[\index]_k}) \ge \delta n_{\gamma} \label{qWc}
 \end{equation}
 holds for some $\delta > 0$. 
 Here ${\rm cap}(\,\cdot\,)$ denotes the capacity relative to $1-\Delta/2$ when $d=2$ 
 and $-\Delta/2$ when $d \ge 3$. 
 The union of all density boxes is denoted by ${\den}_r(\xi)$. 
\end{Def}
\noindent
The next theorem enables us to replace the density boxes by hard traps 
without inducing a substantial upward shift of the principal eigenvalue.  
\begin{spec} {\rm (Theorem 4.2.3} {\rm in} {\rm \cite{Szn98})} 
There exists $\rho>0$ such that for all $M>0$ and sufficiently large $r$, 
\begin{equation}
 \sup_{\xi \in \Xi}
 \left(\lambda^{r}_{1}\left(r^{-1}{\rm supp}\,V(\,\cdot\,,\xi), 
 {\mathcal{R}}_r(\xi) \right)\wedge M 
 -\lambda^{r}_{1}\left(r^{-1}{\rm supp}\,V(\,\cdot\,,\xi)\right)\wedge M \right) \le r^{-\rho},\label{spec}
\end{equation}
where ${\mathcal{R}}_r(\xi)=\T_r \setminus {\mathcal{D}}_r(\xi)$ 
and $\lambda^{r}_{1}(U, R)$ denotes the principal eigenvalue 
of Dirichlet-Schr\"{o}dinger operator $-1/2\Delta + r^2\cdot 1_U$ in $R$. 
\end{spec}
\noindent
As is announced before, we show the next proposition to apply this theorem 
to our \underline{\smash[b]{density}} set. 
\begin{prop}\label{prop4}
 $\myden \subset \den_r(\xi).$ Accordingly, $\myrare \stackrel{ \rm def}{=} \T_r \setminus \myden \supset {\mathcal{R}}_r(\xi)$
\end{prop}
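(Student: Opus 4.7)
Fix a box $C_q \subset \myden$. My goal is to show that every $n_\gamma$-sub-box $C_{\index'}$ of $C_q$ (that is, with $q \preceq \index' \in \mathcal{I}_{n_\gamma}$) is a density box in Sznitman's sense, i.e.\ it satisfies the quantitative Wiener criterion \eqref{qWc} for an appropriate fixed $\delta > 0$. Taking the union over $q$ then yields $\myden \subset \den_r(\xi)$, and $\myrare \supset {\mathcal{R}}_r(\xi)$ follows immediately by complementation inside $\T_r$.

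First I would translate the \underline{density} condition into point counts at intermediate scales. By Definition~\ref{def1}, each of the $2^{d n_{\eta\gamma}}$ sub-boxes of $C_q$ at depth $n_{\eta\gamma}$ contains at least $\frac{1}{2}\cdot 2^{d(n_\gamma-n_{\eta\gamma})}$ points of $\xi^r$, each sitting in its own ``first-octant'' sub-cube of side $2^{-n_\gamma-1}$. Hence, for every $k \in [1,n_{\eta\gamma}]$, the box $C_{[\index']_k}$, being the union of $2^{d(n_{\eta\gamma}-k)}$ such $n_{\eta\gamma}$-sub-boxes, contains at least $N_k := \tfrac{1}{2}\cdot 2^{d(n_\gamma-k)}$ separated points, and crucially any two such points lying in distinct $n_\gamma$-cubes are at Euclidean distance at least $2^{-n_\gamma-1}$ (this is exactly what the choice $2^{-n_\gamma-1}$ in \eqref{myden} buys us).

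Second, I would pass to the $2^k$-rescaled picture underlying $K_{[\index']_k}$. Here $C_{[\index']_k}$ becomes a unit cube and $K_{[\index']_k}$ contains at least $N_k$ pairwise disjoint closed balls of radius $\rho_k := 2^k \sqrt{d}/r$ with centers pairwise separated by at least $2^{k-n_\gamma-1}$. A direct computation using $2^{n_\gamma}\asymp r^\gamma$ and the identity $d\gamma = d-2+2\eta$ shows that for all $k \in [1, n_{\eta\gamma}]$,
\begin{equation*}
 N_k\,\rho_k^{d-2} \gtrsim r^{2\eta(1-\gamma)} \quad (d \ge 3), \qquad \frac{N_k}{\log(1/\rho_k)} \gtrsim \frac{r^{2\eta(1-\eta)}}{\log r} \quad (d=2),
\end{equation*}
and both diverge as $r \to \infty$ since $\eta,\gamma \in (0,1)$. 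Standard lower bounds for the capacity of a disjoint union of well-separated balls (cf.\ Section~2.2 of \cite{Szn98}) therefore produce a universal constant $c_3 > 0$ such that ${\rm cap}(K_{[\index']_k}) \ge c_3$ uniformly in $k \in [1,n_{\eta\gamma}]$ and in all sufficiently large $r$.

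Finally, summing gives $\sum_{k=1}^{n_\gamma}{\rm cap}(K_{[\index']_k}) \ge c_3 \cdot n_{\eta\gamma} \ge \delta\, n_\gamma$ for any $\delta < c_3 \eta$, since $n_{\eta\gamma}/n_\gamma \to \eta$. This is the quantitative Wiener criterion \eqref{qWc}, and the proof is complete. The main obstacle is the second step: one must convert a purely combinatorial density statement into a uniform capacity lower bound. The key geometric fact enabling this conversion is the separation guaranteed by the ``first-octant'' choice in \eqref{myden}, which makes the balls in the rescaled $K_{[\index']_k}$ genuinely disjoint and allows the capacity to saturate at a constant (for $k$ in the relevant range) in both the $d = 2$ and $d \ge 3$ cases. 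The counting in Step~1 and the summation in Step~3 are then essentially bookkeeping.
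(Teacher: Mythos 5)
Your proposal is correct and follows essentially the same route as the paper: reduce to a uniform lower bound ${\rm cap}(K_{[\index']_k})\ge c_3$ for $1\le k\le n_{\eta\gamma}$, count $\asymp 2^{d(n_\gamma-k)}$ well-separated points (with separation $2^{-n_\gamma-1}$ coming from the first-octant choice), and observe that the aggregated ball capacity diverges so the capacity of the union saturates. The only cosmetic difference is that the paper explicitly builds the test measure $\nu_k$ as a normalized sum of equilibrium measures and bounds its energy, whereas you invoke the standard well-separated-balls capacity estimate, which is precisely that computation.
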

\begin{proof}
Let $C_q$ be a \underline{\smash[b]{density}} box. We check the quantitative Wiener criterion \eqref{qWc}
for all $\index \succeq q$ ($\index \in \mathcal{I}_{n_{\gamma}}$) by showing
\begin{equation}
 {\rm cap}(K_{[\index]_k}) \ge c_3(d) \quad \textrm{for all} \quad k \le n_{\eta \gamma}. \label{termwise}
\end{equation}
To get the lower bound for the capacity, we use the following variational characterization: 
\begin{equation*}
 {\rm cap}(K) = \sup \bigg\{\bigg( \iint g(x,y) \,\nu(dx) \,\nu(dy) \bigg)^{-1}; \,
 \nu \in \mathcal{M}_1(K)\bigg\}, 
\end{equation*}
where $\mathcal{M}_1(K)$ denotes the set of probability measure supported on $K$ 
and $g(\, \cdot \, , \, \cdot \,)$ the Green function 
corresponding to $1-\Delta/2$ when $d=2$ and to $-\Delta/2$ when $d \ge 3$. 
By this expression, the proof of \eqref{termwise} is reduced to finding a 
$\nu_k \in \mathcal{M}_1(K_{[\index]_k})$ which satisfies 
\begin{equation}
 \iint g(x,y) \,\nu_k(dx) \,\nu_k(dy) \le c_3(d)^{-1} \label{ener}
\end{equation}
for each $k \le n_{\eta \gamma}$. 

Now, note that \eqref{myden} remains valid for $[\index]_k$ instead of $\index \in \mathcal{I}_{n_{\eta \gamma}}$ 
as long as $k \le n_{\eta \gamma}$. 
Therefore for such $k$, we can find a collection of points 
\begin{equation*}
 \{ x_{m} \in q_{\index_m}+2^{-n_{\gamma}-1}[0,1]^d ; \, \index_m \in \mathcal{I}_{n_{\gamma}} \textrm{ are distinct.} \}_{1 \le m \le n}
 \subset {\rm supp}\, \xi^r 
\end{equation*}
whose cardinality $n \ge 2^{d(n_{\gamma}-k)-1}$. 
We denote by $e_m$ and ${\rm cap}_m$ respectively the equilibrium measure and the capacity of 
$2^k\overline{B}(x_m, \sqrt{d}/r)$ and let
\begin{equation*}
 \nu_k = \frac{\sum_{m=1}^n e_m}{\sum_{m=1}^n {\rm cap}_m} \in \mathcal{M}_1(K_{[\index]_k}). 
\end{equation*}
Let us show that this $\nu_k$ satisfies \eqref{ener}. We use the fact $\iint g(x,y) \, e_m(dx) \,e_m(dy) = {\rm cap}_m$ 
to obtain 
\begin{equation}
 \begin{split}
  &\,\iint g(x,y) \,\nu_k(dx) \,\nu_k(dy)\\
  = &\, \biggl( {\sum_{m=1}^n {\rm cap}_m} \biggr)^{-2} \biggl(\sum_{m = 1}^n \iint g(x,y) \,e_m(dx) \,e_m(dy)
  + \sum_{l \neq m} \iint g(x,y) \, e_l(dx) \,e_m(dy) \biggr)\\
  \le &\, \biggl( {\sum_{m=1}^n {\rm cap}_m} \biggr)^{-1} + {\rm const}(d) \iint_{(0,1)^d \times (0,1)^d} g(x,y) \, dx \, dy.
  \label{each} 
 \end{split}
\end{equation}
In the second inequality, we have implicitly used the fact that 
${\rm d}({\rm supp}\,e_l, {\rm supp}\,e_m) \ge 2^{-n_{\gamma}-2}$ for sufficiently large $r$, 
which is due to our definition of the \underline{\smash[b]{density}} set, 
to replace the sum $\sum_{l \neq m}$ by the integral. 
Since the last integral in \eqref{each} is a constant depending only on $d$, 
it suffices for \eqref{ener} to show that 
${\sum_{m=1}^n {\rm cap}_m} \to \infty$ $(r \to \infty)$. 
If we recall that ${\rm cap}_m$ is just the capacity of a ball with radius $2^{k} \sqrt{d}/r $, 
we find 
\begin{equation*}
 \begin{split}
  \sum_{m=1}^n {\rm cap}_m \ge 
  \left\{
  \begin{array}{lr}
   c_4(d=2) \bigl(\log (2^{-k} r)\bigr)^{-1} 2^{d(n_{\gamma}-k)-1} &(d=2),\\[8pt]
   c_4(d) (2^{k} /r)^{d-2} 2^{d(n_{\gamma}-k)-1} &(d \ge 3).
  \end{array}\right.
  \end{split}
\end{equation*}
When $d \ge 3$ and $1 \le k \le n_{\eta\gamma}$, the right-hand side is larger than
\begin{equation*}
 \begin{split}
  c_4(d) r^{2-d} 2^{dn_{\gamma}-2k-1} &\ge c_4(d) r^{2-d+d\gamma-2\eta\gamma}/8 \\
  & = c_4(d) r^{2\eta(1-\gamma)}/8 \\
  &\to \infty \qquad (r \to \infty),
 \end{split}
\end{equation*}
as desired. 
Here we have used $2^{-n_{\beta}-1} < r^{-\beta} \le 2^{-n_{\beta}}$ for $\beta > 0$ in the first inequality. 
The case $d=2$ can be treated by the same way and the proof of Proposition \ref{prop4} is completed. 
\end{proof}
Now we turn on to the estimate for the number of non-\underline{\smash[b]{density}} boxes in $\T_r$. 
It is clear from the definition that the number should be very small. 
However, we need a quantitative estimate for the coarse graining to go well. 
We pick a positive parameter 
\begin{equation}
 \chi \in \Bigl( 2\eta^2+\Bigl( d-2+ \frac{2\theta}{d} \Bigr)\eta, \frac{2\theta}{d} \wedge 1 \Bigr)
 \label{chi}
\end{equation}
so that 
\begin{gather}
 d(1-\eta\gamma)+(1-\gamma)\theta+\chi > d+\frac{2\theta}{d},\label{prob} \\
 d+\chi < d+\frac{2\theta}{d}. \label{card} 
\end{gather} 
It is easy to see from \eqref{eta} that such a choice of $\chi$ is possible. 
Thanks to the relation \eqref{prob}, the right-hand side of the next proposition is
\begin{equation*}
 o\bigl( \exp\bigl\{- r^{d+\frac{2\theta}{d}} \bigr\} \bigr) 
 = o\Bigl( \exp\Bigl\{- t^{\frac{d^2+2\theta}{d^2+2d+2\theta}}\Bigr\} \Bigr)
\end{equation*}
\begin{prop}\label{prop5}
 \begin{equation*}
  \P_{\theta}(|\myrare| \ge r^{\chi})
  \le \exp \set{-c_5(d)r^{d(1-\eta\gamma)+(1-\gamma)\theta+\chi}}.\label{Prop5} 
 \end{equation*}
\end{prop}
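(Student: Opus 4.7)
The plan is to show that any particular box $C_q$ being non-\underline{\smash[b]{density}} is exponentially unlikely and then combine such rare events across an $r^\chi$-tuple of boxes using the independence that comes from disjointness of dyadic descendants. Unwinding Definition \ref{def1}, $C_q$ fails to be \underline{\smash[b]{density}} iff there is some $\iota\in\mathcal{I}_{n_{\eta\gamma}}$ with $q\preceq\iota$ such that at least $M/2$ of the $M:=2^{d(n_\gamma-n_{\eta\gamma})}\asymp r^{d\gamma(1-\eta)}$ half-boxes $B_{\iota'}:=q_{\iota'}+2^{-n_\gamma-1}[0,1]^d$ ($\iota'\succeq\iota$ in $\mathcal{I}_{n_\gamma}$) contain no $\xi^r$-point. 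Attach to each $B_{\iota'}$ its native lattice set $\mathcal{L}_{\iota'}:=\{q'\in\Z^d:r^{-1}q'\in B_{\iota'}\}$, of cardinality $\asymp r^{d(1-\gamma)}$; emptiness of $B_{\iota'}$ implies the superset event $E_{\iota'}:=\{\xi_{q'}\notin rB_{\iota'}-q'\ \forall q'\in\mathcal{L}_{\iota'}\}$. Restricting to those $q'\in\mathcal{L}_{\iota'}$ whose distance to $\partial(rB_{\iota'})$ exceeds a fixed fraction of $r^{1-\gamma}$ (still a positive fraction of $\mathcal{L}_{\iota'}$) and invoking \eqref{tail} gives $\P_\theta(\xi_{q'}\notin rB_{\iota'}-q')\le\exp\{-c\, r^{(1-\gamma)\theta}\}$; independence of the $\xi_{q'}$'s then yields $\P_\theta(E_{\iota'})\le\exp\{-c_6\, r^{(1-\gamma)(\theta+d)}\}$.

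For fixed $\iota$ the families $\mathcal{L}_{\iota'}$ are pairwise disjoint, so $(E_{\iota'})_{\iota'\succeq\iota}$ are mutually independent. A binomial union bound gives
\[
\P_\theta(\ge M/2\text{ empty half-boxes in }C_\iota)\le\binom{M}{M/2}\exp\bigl\{-c_6\, r^{(1-\gamma)(\theta+d)}M/2\bigr\}\le\exp\bigl\{-c_7\, r^{d(1-\eta\gamma)+(1-\gamma)\theta}\bigr\},
\]
using the identity $(1-\gamma)(\theta+d)+d\gamma(1-\eta)=d(1-\eta\gamma)+(1-\gamma)\theta$ together with the fact that $\log\binom{M}{M/2}\le M\log 2\asymp r^{d\gamma(1-\eta)}$ is strictly dominated by the exponent. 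A further union bound over the $2^{dn_{\eta\gamma}}\asymp r^{d\eta\gamma}$ choices of $\iota\succeq q$ (again absorbable) produces $\P_\theta(C_q\text{ is non-\underline{\smash[b]{density}}})\le\exp\{-c_8\, r^{d(1-\eta\gamma)+(1-\gamma)\theta}\}$.

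For $\{|\myrare|\ge r^\chi\}$ to occur, some subset $S\subset\Z^d\cap\T_r$ of size $\lceil r^\chi\rceil$ must consist entirely of non-\underline{\smash[b]{density}} boxes. For a fixed $S$ and any choice of witnesses $(\iota_q)_{q\in S}$ with $\iota_q\in\mathcal{I}_{n_{\eta\gamma}}$ and $\iota_q\succeq q$, the dyadic descendants of distinct $q\in S$ are disjoint, hence so are the native-point families driving the $E_{\iota'}$ events across different $q$; independence of $(\xi_{q'})_{q'}$ therefore makes the witness events independent across $q\in S$, and the single-box analysis above yields
\[
\P_\theta\Bigl(\bigcap_{q\in S}\{C_q\text{ is non-\underline{\smash[b]{density}}}\}\Bigr)\le(2^{dn_{\eta\gamma}})^{|S|}\exp\bigl\{-c_7\, r^{d(1-\eta\gamma)+(1-\gamma)\theta}|S|\bigr\}.
\]
Combining with the $\binom{|\T_r\cap\Z^d|}{\lceil r^\chi\rceil}\le\exp\{r^\chi O(\log r)\}$ choices of $S$ and absorbing all polylogarithmic prefactors (legitimate because $d(1-\eta\gamma)+(1-\gamma)\theta>0$) yields the claim.

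The delicate point is the independence bookkeeping in the final step: without the native-point reformulation an atypically large $\xi_{q'}$ could populate a half-box far from its home cell and thereby couple the non-\underline{\smash[b]{density}} events of different $q\in S$. Working with the supersets $E_{\iota'}$ eliminates this coupling, so that disjointness of dyadic subtrees rooted at distinct $q\in S$ translates cleanly into disjointness of the $\xi_{q'}$-families, and the remainder is careful accounting of combinatorial prefactors against the exponent.
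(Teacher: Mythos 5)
Your proof is correct and takes essentially the same approach as the paper's: both reduce the failure of the \underline{\smash[b]{density}} condition for a single $C_q$ to a necessary condition involving only the native displacements $(\xi_{q'})_{r^{-1}q'\in C_q}$ (your superset events $E_{\iota'}$), bound a single $E_{\iota'}$ at cost $\exp\{-c\,r^{(1-\gamma)(d+\theta)}\}$, use independence across $\iota'$ with a binomial union bound over witnesses $\iota$, and then multiply across $q\in S$ using independence of the native families before summing over $S$. The only cosmetic difference is that you carry the witness union bound $(2^{dn_{\eta\gamma}})^{|S|}$ into the $S$-level estimate (harmlessly redundant, since the paper simply notes the single-box necessary condition is itself a $q$-indexed independent family and multiplies the already-bounded probabilities), and you estimate $\P_\theta(E_{\iota'})$ by a crude restriction to points deep inside $rB_{\iota'}$ rather than invoking the integral estimate from Lemma \ref{lem1}; both yield the same order.
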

\begin{proof}
Throughout the proof, $c_5(d)>0$ is a constant whose value may change line by line. 
We start with an estimate for the probability of $C_q \not\subset \myden$. 
To this end, we consider the following necessary condition: 
\begin{equation}
 \begin{split}
  &\textrm{there exists an }\index \succeq q \; (\index \in \mathcal{I}_{n_{\eta\gamma}}) \textrm{ such that for a half of }
  \index' \succeq \index \: (\index' \in \mathcal{I}_{n_{\gamma}}), \\
  &r^{-1}q'+r^{-1}\xi_{q'} \notin q_{\index'} + 2^{-n_{\gamma}-1}[0,1]^d \textrm{ for}
  \textrm{ all } r^{-1}q' \in q_{\index'} + 2^{-n_{\gamma}-1}[0,1]^d. \label{suff}
 \end{split}
\end{equation}
Note that the events in the second line are independent in $\index' \in \mathcal{I}_{n_{\gamma}}$. 
Moreover, the probability of the each event is 
\begin{equation*}
\begin{split}
 \P_{\theta}(&q'+\xi_{q'} \notin r(q_{\index'} + 2^{-n_{\gamma}-1}[0,1]^d) 
 \textrm{ for all } q' \in r(q_{\index'} + 2^{-n_{\gamma}-1}[0,1]^d))\\
 \le &\exp\Bigl\{- \int_{r^{1-\gamma}[0,1]^d}{\rm d}(x, \partial (r^{1-\gamma}[0,1]^d))^{\theta}dx (1+o(1))\Bigr\}\\
 \le &\exp\set{-c_5(d)r^{(1-\gamma)(d+\theta)}}, 
\end{split}
\end{equation*}
where we have used \eqref{Lem1-upper} for the first inequality. 
Therefore, summing over the choices of the indices $\index$ and $\index'$'s in \eqref{suff}, we obtain 
\begin{equation*}
 \begin{split}
  &\P_{\theta}(C_q \not\subset \myden) \\
  \le &\, 2^{d n_{\eta \gamma}} \binom{2^{d(n_{\gamma}- n_{\eta \gamma})}}{2^{d(n_{\gamma}- n_{\eta \gamma})-1}}
  \exp \set{-c_5(d)r^{(1-\gamma)(d+\theta)}}^{2^{d(n_{\gamma}- n_{\eta \gamma})-1}}\\
  \le &\, \exp \set{-c_5(d) r^{(1-\gamma)(d+\theta) + d\gamma(1-\eta)}} 
 \end{split}
\end{equation*}
for large $r$. In the second line, the first factor represents the choice of the index $\index$ and  
the second factor the choice of the indices $\index'$'s. 
Since the event \eqref{suff} itself is independent in $q \in \Z^d$, we have 
\begin{equation*}
 \begin{split}
  \P_{\theta}(|\T_r \setminus \myden| \ge r^{\chi})
  \le &\,t^{d r^{\chi}}\left(\exp \set{-c_5(d)r^{(1-\gamma)(d+\theta) + d\gamma(1-\eta)}}\right)^{r^{\chi}}\\ 
  \le &\,\exp \set{-c_5(d)r^{d(1-\eta\gamma)+(1-\gamma)\theta+\chi}}, 
 \end{split}
\end{equation*}
which is the desired estimate. 
\end{proof}
\noindent
Now, let us bound the cardinality of 
\begin{equation*}
 \begin{split}
  \smash[b]{\underline{\S}}_r = \bigl\{ & \left( \myrare, r^{-1}{\rm supp}\,V(\,\cdot\,, \xi) \cap \myrare \right);\\
  &\,\xi \in \Xi, \myrare \neq \emptyset \textrm{ is connected}, |\myrare| < r^{\chi}\bigr\}. 
 \end{split}
\end{equation*}
To this end, we first note that $|\myrare|$, the number of unit cubes contained in $\myrare$, 
varies from 1 to $r^{\chi}$. 
Secondly, we have at most $(t/r)^{d r^{\chi}}$ choices for the configuration of 
the unit cubes in $\myrare$, for any given $|\myrare|<r^{\chi}$. 
Finally, there are at most $2^{r^{d}}$ possible configurations of the traps 
inside each unit cube in $\myrare$. 
Therefore, we have 
\begin{equation}
 \begin{split}
  \# \smash[b]{\underline{\S}}_r &\le r^{\chi} (t/r)^{d r^{\chi}}(2^{r^d})^{r^{\chi}}\\
  &=\, \exp\set{r^{d+\chi}\log 2 (1+o(1))}\\
  &=\exp\Bigl\{o\Bigl( t^{\frac{d^2+2\theta}{d^2+2d+2\theta}} (\log t)^{-\frac{\theta}{4+\theta}} \Bigr)\Bigr\},
  \label{complexity} 
 \end{split}
\end{equation} 
where the third line comes from the relation \eqref{card}. 
\subsection{Patching estimates}\label{patching}
We complete the proof of survival asymptotics in this section. 
Throughout this section, we use the correct scale $r$ in \eqref{scale} and let
$\e>0$ denote an arbitrary small number. 
We introduce 
\begin{equation*}
 M_r = \inf_{(R_r,\, U_r) \in \smash[b]{\underline{\S}}_r}\Bigg\{\lambda_1^r(U_r, R_r)
 + \delta_c(r)^{-\theta}\int_{ R_r \setminus U_r } {\rm d}(x, \partial (R_r \setminus U_r))^{\theta}dx \Bigg\} \label{M}
\end{equation*}
to describe the asymptotics. 
We know $\inf_{r \ge 1} M_r > 0$ from Proposition \ref{prop3} and 
we can also prove $\sup_{r \ge 1} M_r < \infty$ by substituting  
the punched domain \eqref{punched} with $n=1$ and $\delta(r)=\delta_c(r)$ to $R_r \setminus U_r$. 
We postpone the proof of the latter fact to Appendix \ref{B}. 

What we prove here is the following asymptotics which is finer than the results stated in Section \ref{intro}. 
\begin{thm}\label{thm3}
Let $r$ be as in \eqref{scale}. For modified potential \eqref{mptl} with $\e, L, h=1$, we have 
\begin{equation*}
 \frac{1}{tr^{-2}}\log S_t \sim - M_r\label{fine} \quad {\rm as} \quad t \to \infty.
\end{equation*} 
\end{thm}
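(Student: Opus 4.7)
The plan is to establish the two-sided bound $\log S_t = -tr^{-2} M_r (1+o(1))$ as $t\to\infty$ by fulfilling the three uniformity promises made in Section~\ref{variational}: negligible combinatorial complexity via \eqref{complexity}, uniform spectral asymptotics for the Brownian expectation, and uniform emptiness asymptotics from Lemma~\ref{lem1}.

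\textbf{Lower bound.} Fix $\epsilon>0$ and pick a near-minimizer $(R_r^\ast,U_r^\ast)\in\underline{\S}_r$ whose variational value is at most $M_r+\epsilon$ (translated, if necessary, so that $R_r^\ast$ surrounds the origin). Consider the event $\Omega^\ast$ on which (i) inside $rR_r^\ast$ the unit cubes hit by displaced lattice points are exactly those composing $r(U_r^\ast\cap R_r^\ast)$, and (ii) outside $rR_r^\ast$ each $q\in\Z^d$ is displaced so that the \underline{\smash[b]{density}} condition is satisfied, forcing $\myrare=R_r^\ast$. The lower-bound half of Lemma~\ref{lem1}, applied with the uniform constants extracted from \eqref{Lem1-upper}--\eqref{Lem1-3rd}, yields
\begin{equation*}
  \log\P_\theta(\Omega^\ast)\ \ge\ -r^{d+\theta}\int_{R_r^\ast\setminus U_r^\ast}{\rm d}(x,\partial(R_r^\ast\setminus U_r^\ast))^\theta\,dx\ -\ O(r^d).
\end{equation*}
On $\Omega^\ast$, $\mathrm{supp}\,V(\cdot,\xi)\cap rR_r^\ast = r(U_r^\ast\cap R_r^\ast)$ and the surrounding density set acts as a hard wall, so a Feynman--Kac lower bound obtained by inserting the principal eigenfunction of $-\Delta/2+r^2 1_{U_r^\ast}$ on $R_r^\ast$ gives
\begin{equation*}
  E_0\Bigl[\exp\Bigl\{-\int_0^t V(B_s,\xi)\,ds\Bigr\}\Bigr]\ \ge\ c\,t^{-O(1)}\exp\bigl\{-\lambda_1^r(U_r^\ast,R_r^\ast)\,tr^{-2}\bigr\}
\end{equation*}
uniformly on $\Omega^\ast$. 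Combining these, using $r^{d+\theta}=tr^{-2}\delta_c(r)^{-\theta}(1+o(1))$ from \eqref{scale} and $r^d=o(tr^{-2})$ (true for any $\theta>0$), gives $\log S_t\ge -tr^{-2}(M_r+2\epsilon)$ for large $t$, and sending $\epsilon\downarrow 0$ completes the lower bound.

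\textbf{Upper bound.} Split
\begin{equation*}
  S_t\ \le\ \P_\theta(|\myrare|\ge r^\chi)\ +\ \sum_{(R_r,U_r)\in\underline{\S}_r}\P_\theta(A_{R_r,U_r})\sup_{\xi\in A_{R_r,U_r}}E_0\Bigl[\exp\Bigl\{-\int_0^t V(B_s,\xi)\,ds\Bigr\}\Bigr],
\end{equation*}
where $A_{R_r,U_r}=\{\myrare=R_r,\ r^{-1}\mathrm{supp}\,V(\cdot,\xi)\cap\myrare=U_r\cap R_r\}$. By Proposition~\ref{prop5} and \eqref{prob}, the first summand is super-exponentially smaller than $\exp\{-tr^{-2}M_r\}$. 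For the sum, Proposition~\ref{prop4} combined with the Spectral Control theorem replaces the complicated potential on the density set by a Dirichlet wall at a cost of at most $r^{-\rho}$ in the principal eigenvalue, and a spectral upper bound then produces
\begin{equation*}
  E_0\Bigl[\exp\Bigl\{-\int_0^t V(B_s,\xi)\,ds\Bigr\}\Bigr]\ \le\ t^{O(1)}\exp\bigl\{-(\lambda_1^r(U_r,R_r)-r^{-\rho})\,tr^{-2}\bigr\}
\end{equation*}
uniformly for $\xi\in A_{R_r,U_r}$. The upper-bound half of Lemma~\ref{lem1} gives $\log\P_\theta(A_{R_r,U_r})\le -r^{d+\theta}\int_{R_r\setminus U_r}{\rm d}(x,\partial(R_r\setminus U_r))^\theta\,dx+O(r^d)$. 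Together with $\#\underline{\S}_r=\exp\{o(tr^{-2})\}$ from \eqref{complexity}, these estimates combine to $\log S_t\le -tr^{-2}M_r+o(tr^{-2})$, completing the upper bound.

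\textbf{Main obstacle.} The delicate point is uniformity. Lemma~\ref{lem1} is stated asymptotically, but its proof supplies effective constants in \eqref{Lem1-upper}--\eqref{Lem1-3rd} that are uniform in $U$, with additive errors of order $|U^c|\le r^\chi$; these are swallowed by our slack because $\chi<d+2\theta/d$ by \eqref{chi}. On the Brownian side, upgrading the pointwise limit $\log E_0[\cdots]\sim-\lambda_1 t$ to a uniform two-sided estimate relies on the $L^\infty$-eigenfunction bound $\|\psi_1\|_\infty\le c(d)\lambda_1^{d/4}$ already used in the proof of Proposition~\ref{prop3}, yielding a polynomial-in-$t$ prefactor that disappears after taking logarithms and dividing by $tr^{-2}$.
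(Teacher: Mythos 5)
Your upper-bound argument is essentially the paper's: split off $\{|\myrare|\ge r^\chi\}$, apply Spectral control to replace the density set by a Dirichlet wall, use the uniform Feynman--Kac spectral upper bound (with the $L^\infty$ eigenfunction estimate providing the polynomial prefactor), and invoke Lemma~\ref{lem1}, Proposition~\ref{prop6}, and the cardinality bound \eqref{complexity}. That part is sound.

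The genuine gap is in the lower bound, specifically in the step where you ``insert the principal eigenfunction'' and claim
\begin{equation*}
E_0\Bigl[\exp\Bigl\{-\int_0^t V(B_s,\xi)\,ds\Bigr\}\Bigr]\ \ge\ c\,t^{-O(1)}\exp\bigl\{-\lambda_1^r(U_r^\ast,R_r^\ast)\,tr^{-2}\bigr\}.
\end{equation*}
The eigenfunction expansion gives only $E_0[\cdots]\ge e^{-\lambda_1 t r^{-2}}\phi_1(0)\int\phi_1$, and a priori nothing prevents the principal eigenfunction $\phi_1$ of $-\frac{1}{2}\Delta+r^21_{U_r^\ast}$ in $R_r^\ast$ from being negligibly small, or even vanishing, at the origin; ``translated so that $R_r^\ast$ surrounds the origin'' is not enough, since the minimizing configuration may force $\phi_1$ to concentrate far away from any particular point you choose. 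This is exactly what the paper's Proposition~\ref{prop8} is designed to handle, and it is the technical heart of the lower bound: one works with the $L^1$-normalized eigenfunction $\phi_*$, uses pigeonhole (recall $|R_r^\ast|<r^\chi$) to locate a cube $C(r^{-1}q,r^{-1})$ carrying mass at least $r^{-d-\chi}$, uses the enlargement of $\underline{\S}_r$ to $\underline{\S}_r'$ to shift that cube to the origin, trims an $r^{-2}$-shell via the $L^\infty$ bound of Lemma~\ref{unif}, and finally runs the Brownian motion for a short time $r^{-4}$ inside that cube with van den Berg's Dirichlet heat-kernel lower bound before switching to the eigenfunction expansion via Chapman--Kolmogorov. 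Without this (or an equivalent) argument showing that the starting point can be made to sit where $\phi_*$ has quantitative mass, your lower bound is not established. Secondarily, the event $\Omega^\ast$ you condition on is over-constrained: requiring the exact trap configuration inside $R_r^\ast$ \emph{and} the density condition outside is stronger than the emptiness requirement the paper uses, and while this likely still works (the extra cost is plausibly $O(r^d)=o(tr^{-2})$), you assert this follows from Lemma~\ref{lem1} alone, which only addresses the emptiness half.
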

\begin{rem}
The extensions of this theorem for other values of $\e, L, h$ are routine with appropriate changes on the notation. 
Though we have this finer result only for the modified traps \eqref{mptl}, it seems not so far from 
the original model at least in the case of hard traps. Indeed, for the hard traps, 
the modification is equivalent to discretizing the distribution of $\xi_q$ as 
\begin{equation*}
 \P_{\theta}(\xi_q \in dx) = N_{\rm disc}(d,\theta) \sum_{q \in \Z^d} \exp\{-|q|^{\theta} \}\delta_q(dx). 
\end{equation*}
However, we still do not know whether $\limsup_{r \to \infty}M_r$ and $\liminf_{r \to \infty}M_r$ coincide 
or not. 
\qed \end{rem}
\noindent
\textit{Upper bound}: 
For any $U \subset \R^d$, we have the following upper bound on the Feynman-Kac semigroup 
(see e.g.\ (3.1.9) of \cite{Szn98}): 
\begin{equation*}
 \begin{split}
  & \sup_{x \in \R^d}E_x\left[ \exp\set{-\int_0^t 1_U (B_s) \,ds};\,T_{\T} >t \right]\\
  & \le c(d)(1+(\lambda_1(U,\T) t)^{d/2})\exp\set{-\lambda_1(U,\T) t}. 
 \end{split}
\end{equation*}
It follows from this estimate that 
\begin{equation*}
 E_0\left[ \exp\set{-\int_0^t 1_U(B_s) \,ds};\,T_{\T} >t \right]
 \le c(d, \e)\exp\set{-(1-\e)\lambda_1(U,\T) t}, \label{unif-up}
\end{equation*}
where $c(d, \e)=\sup_{\lambda > 0}c(d)(1+\lambda^{d/2})\exp\set{-\e\lambda}$. 
Thus, using Spectral control \eqref{spec} and Proposition \ref{prop5}, we have 
\begin{equation}
 \begin{split}
  S_t \le &\, c(d, \e)\E_{\theta} \left[\exp \set{- (1-\e) \lambda_1({\rm supp}\,V(\,\cdot\,, \xi))t }\right]\\
  \le &\, c(d, \e)\E_{\theta} \big[\exp \set{- (1-\e) (\lambda_1^r(r^{-1}{\rm supp}\,V(\,\cdot\,, \xi), \myrare)
  \wedge M_r-r^{-\rho})tr^{-2}};\\
  &\qquad |\myrare| < r^{\chi} \big]
  + \P_{\theta}(|\myrare| \ge r^{\chi})\\
  \le &\, c(d, \e)\# \smash[b]{\underline{\S}}_r  \sup_{(R_r,\, U_r) \in \smash[b]{\underline{\S}}_r} 
  \P_{\theta}(\xi^r(R_r \setminus U_r)=0) \\
  &\times \exp \set{- (1-\e) (\lambda_1^r(U_r, R_r)\wedge M_r-r^{-\rho})tr^{-2}} 
  + o\Bigl( \exp\Bigl\{- t^{\frac{d^2+2\theta}{d^2+2d+2\theta}} \Bigr\} \Bigr) 
  \label{ub}
 \end{split}
\end{equation}
for large $t$, 
where we have used the fact that the principal eigenvalue is the infimum of those over 
the connected components of the domain to assume $R_r$ to be connected. 
Since the factor $\# \smash[b]{\underline{\S}}_r$ (by \eqref{complexity}) 
and the second term is negligible compared with the results, we focus on the variational problem. 
In order to apply Lemma \ref{lem1} to the emptiness probability term, we see that $r(R_r \setminus U_r)$ 
satisfies the assumption of the lemma when $(R_r, U_r)\in \smash[b]{\underline{\S}}_r$. 
\begin{prop}\label{prop6}
 For any $(R_r, U_r) \in \smash[b]{\underline{\S}}_r$, let $W_r = R_r \setminus U_r$. Then we have 
 \begin{equation*}
  \int_{ W_r } {\rm d}(x, \partial W_r)^{\theta} dx
  \ge c_6(d,\theta) r^{- \gamma(\theta + d\eta)} |W_r|
 \end{equation*}
 for large $r$. In particular 
 \begin{equation*}
  \lim_{r \to \infty} \frac{\int_{ rW_r } {\rm d}(x, \partial (rW_r))^{\theta} dx}{|rW_r|} = \infty. 
 \end{equation*}
\end{prop}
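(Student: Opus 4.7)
The plan is to exploit the failure of the density condition of Definition~\ref{def1} at every unit cube inside $R_r$ in order to extract, within each such cube, a large family of trap-free ``bubbles'' of side $\sim r^{-\gamma}$ lying inside $W_r$. The lower bound on $\int_{W_r} d(\cdot,\partial W_r)^\theta\,dx$ will then follow from integrating $d(\cdot,\partial B)^\theta$ over each bubble $B$ and summing; the key geometric comparison is that $B \subset W_r$ forces $d(x,\partial W_r) \geq d(x,\partial B)$ for every $x \in B$.

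First I would fix a unit cube $C_q \subset R_r$ and invoke the negation of~\eqref{myden}: there exists some $\index \in \mathcal{I}_{n_{\eta\gamma}}$ with $q \preceq \index$ such that, for at least half of the $\index' \succeq \index$ in $\mathcal{I}_{n_\gamma}$, the sub-cube $Q_{\index'} := q_{\index'} + 2^{-n_\gamma-1}[0,1]^d$ contains no point of $\xi^r$. For each such $\index'$ I would let $B_{\index'}$ be $Q_{\index'}$ shrunk inward by $1/(2r)$ on every face. The absence of $\xi^r$-points in $Q_{\index'}$ rules out any active trap cube of side $1/r$ centred there, while trap cubes arriving from outside can penetrate by at most $1/(2r)$; hence $B_{\index'} \cap U_r = \emptyset$. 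Combined with $B_{\index'} \subset C_q \subset R_r$ this gives $B_{\index'} \subset W_r$. Since $\gamma<1$, for large $r$ the bubble $B_{\index'}$ is a cube of side $2^{-n_\gamma-1} - 1/r \geq c(d) r^{-\gamma}$, and the standard identity $\int_{[0,\ell]^d} d(x,\partial [0,\ell]^d)^\theta\,dx = c(d,\theta)\ell^{d+\theta}$ yields $\int_{B_{\index'}} d(x,\partial W_r)^\theta\,dx \geq c(d,\theta) r^{-\gamma(d+\theta)}$.

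The bubbles within one $C_q$ are disjoint (they sit in distinct $C_{\index'}$'s) and their number is at least $\tfrac{1}{2} \cdot 2^{d(n_\gamma - n_{\eta\gamma})} \geq c(d) r^{d\gamma(1-\eta)}$; bubbles from different $C_q$'s are automatically disjoint. Summing over the $|R_r|$ non-density unit cubes in $R_r$ and using $|W_r| \leq |R_r|$, I would obtain
\begin{equation*}
 \int_{W_r} d(x,\partial W_r)^\theta\,dx \geq c(d,\theta) |R_r| r^{-\gamma(\theta + d\eta)} \geq c(d,\theta) |W_r| r^{-\gamma(\theta + d\eta)},
\end{equation*}
which is the first claim. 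For the ``in particular'' statement, the change of variables $x = ry$ scales the integral by $r^{d+\theta}$ and the volume by $r^d$, so the quotient is at least $c\,r^{\theta(1-\gamma) - d\eta\gamma}$. Substituting $\gamma = (d-2)/d + 2\eta/d$, the exponent equals $2\theta(1-\eta)/d - \eta(d-2) - 2\eta^2$, which is strictly positive thanks to the choice~\eqref{eta} of $\eta$, so the ratio tends to $\infty$ as $r \to \infty$.

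The delicate point will be the bubble construction: I need the $1/r$-buffer protecting against traps leaking in from outside to be negligible relative to the bubble side $\sim r^{-\gamma}$, which is exactly where the condition $\gamma < 1$ (and hence the upper bound on $\eta$ in~\eqref{eta}) enters. The rest is combinatorial bookkeeping on the dyadic tree together with the standard distance-function integral on a cube.
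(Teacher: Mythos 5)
Your argument is essentially the same as the paper's: in each non-\underline{\smash[b]{density}} unit cube you locate, via the negation of \eqref{myden}, a dyadic block $C_{\index}$ half of whose $2^{-n_\gamma}$-level descendants carry a trap-free sub-cube of side $\sim r^{-\gamma}$ inside $W_r$; you then integrate $d(\cdot,\partial W_r)^\theta$ over these disjoint bubbles. The only difference from the paper is the choice of bubble: the paper takes the concentric ``middle-half'' cube $q_{\index'}+2^{-n_\gamma-1}[1/4,3/4]^d$, while you shrink $Q_{\index'}=q_{\index'}+2^{-n_\gamma-1}[0,1]^d$ inward by a thin margin. Both are fine for the asymptotics since the margin is $O(1/r)=o(r^{-\gamma})$.

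One small slip in the bubble construction: shrinking by $1/(2r)$ is not quite enough. A trap cube $C(q',1/r)$ that straddles $\partial Q_{\index'}$ (so cannot be ruled out by $\xi^r(Q_{\index'})=0$) can reach up to distance $1/r$ into the interior of $Q_{\index'}$ — take $q'$ just inside $\partial Q_{\index'}$ with the $\xi^r$-point just outside — so a $1/(2r)$-shrunk cube may still meet $U_r$. You need to shrink by (at least) $1/r$, which of course leaves the bubble side $\geq 2^{-n_\gamma-1}-2/r\gtrsim r^{-\gamma}$ intact and changes nothing downstream. The rest — disjointness across and within unit cubes, the count $\gtrsim r^{d\gamma(1-\eta)}|R_r|$, the scaling $\int_{[0,\ell]^d}d(\cdot,\partial)^\theta = c\,\ell^{d+\theta}$, $|W_r|\le|R_r|$, and the exponent bookkeeping showing $\theta(1-\gamma)-\gamma d\eta>0$ from \eqref{eta} — all matches the paper.
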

\begin{proof}
By the definition of the \underline{\smash[b]{density}} box, each $C_q \subset R_r$ contains a 
$C_{\index}$ ($\index \in \mathcal{I}_{n_{\eta\gamma}}$) such that 
half of $\{C_{\index'}' \stackrel{\rm def}{=} q_{\index'}+2^{-n_{\gamma}-1}[1/4,3/4]^d\}_{\index \preceq \index' \in \mathcal{I}_{n_{\gamma}}}$ 
do not intersect with $U_r$ for large $r$. 
Therefore, the number of such $q_{\index'}+2^{-n_{\gamma}-1}[1/4,3/4]^d$ 
in the whole $R_r$ is larger than $2^{-d-1}2^{dn_{\gamma}-dn_{\eta\gamma}}|R_r|$. 
Since ${\rm d}(x, \partial W_r) \ge {\rm d}(x, \partial C_{\index'}')$ for $x \in C_{\index'}'$, 
we can obtain the desired estimate as follows: 
\begin{equation*}
 \begin{split}
  \int_{ W_r } {\rm d}(x, \partial W_r)^{\theta} dx
  \ge &\, 2^{-d-1}2^{dn_{\gamma}-dn_{\eta\gamma}}|R_r| 
  \int_{ C_{\index'}' } {\rm d}(x, \partial C_{\index'}')^{\theta} dx\\
  \ge &\, c_6(d, \theta) r^{d\gamma(1-\eta)}r^{-\gamma(d+\theta)}|W_r|, 
 \end{split}
\end{equation*}
where we have used the change of variables 
\begin{equation*}
\int_{ C_{\index'}' } {\rm d}(x, \partial C_{\index'}')^{\theta} dx
=(2^{n_{\gamma}-1})^{d+\theta} \int_{ [1/4, 3/4]^d } {\rm d}(x, \partial ([1/4, 3/4]^d) )^{\theta} dx
\end{equation*}
for the second inequality. 
The latter claim follows immediately since \eqref{eta} implies $\theta > \gamma(\theta + d\eta)$. 
\end{proof}
\noindent
Using the relation $tr^{-2}=r^{d+\theta}\delta_c(r)^{\theta}$, Lemma \ref{lem1}, and Proposition \ref{prop6} 
in \eqref{ub}, we obtain 
\begin{equation*}
 \begin{split}
  \frac{1}{tr^{-2}}\log S_t
  \le & \, - (1-\e) \inf_{(R_r,\, U_r) \in \smash[b]{\underline{\S}}_r}\bigg\{\lambda_1^r(U_r, R_r)\wedge M_r-r^{-\rho}\\ 
  & \, + \delta_c(r)^{-\theta}\int_{R_r\setminus U_r} {\rm d}(x, \partial (R_r\setminus U_r))^{\theta} \,dx
  (1+o(1)) \bigg\}\\
  \le & \, - (1-2\e) M_r \label{fin} 
 \end{split}
\end{equation*}
for sufficiently large $r$. Since $\e > 0$ is arbitrary, the proof of the upper bound is completed. \qed \\[10pt]
\textit{Lower bound}: We start with the following obvious bound: 
\begin{equation}
\begin{split}
 \log S_t \ge &\, \log \sup_{(R_r,\,U_r) \in \smash[b]{\underline{\S}}_r'} 
 \P_{\theta}(\xi^r(R_r \setminus U_r)=0)\\
 & \times E_0\Bigl[\exp \Bigl\{-\int_0^{tr^{-2}} r^2 \cdot 1_{U_r}(B_s) \,ds\Bigr\} ; T_{R_r} > tr^{-2} \Bigr], \label{lb'}
\end{split}
\end{equation}
where 
\begin{equation*}
 \smash[b]{\underline{\S}}_r'  = \set{(r^{-1}q+R_r,\,r^{-1}q+U_r); 
 q \in \Z^d, (R_r,\,U_r) \in \smash[b]{\underline{\S}}_r}. 
\end{equation*}
The role of this extension of $\smash[b]{\underline{\S}}_r$ will be clear in the proof 
of Proposition \ref{prop8}. 
We first rewrite the emptiness probability term in the right-hand side of \eqref{lb'}. 
Thanks to Proposition \ref{prop6}, we can use Lemma \ref{lem1} to obtain 
\begin{equation}
\begin{split}
 \log S_t \ge &\, \log \sup_{(R_r,\,U_r) \in \smash[b]{\underline{\S}}_r'} 
 \exp\Bigl\{-r^{d+\theta} \int_{R_r \setminus U_r} {\rm d}(x, \partial (R_r \setminus U_r))^{\theta}dx(1+o(1))\Bigr\}\\
 & \times E_0\Bigl[\exp \Bigl\{-\int_0^{tr^{-2}} r^2 \cdot 1_{U_r}(B_s) \,ds\Bigr\} ; T_{R_r} > tr^{-2} \Bigr]. 
 \label{lb''}
\end{split}
\end{equation}
Next, we rewrite the Brownian motion part of \eqref{lb''}. Though the result seems to be natural, 
the proof is rather complicated. 
\begin{prop}\label{prop8}
For sufficiently large $t$, we have
\begin{equation}
 \frac{1}{tr^{-2}}\log S_t \ge - (1+\e) \inf_{(R_r, \,U_r) \in \smash[b]{\underline{\S}}_r'}\Bigg\{\lambda_1^r(U_r, R_r) 
  + \delta_c(r)^{-\theta}\int_{R_r \setminus U_r} {\rm d}(x, \partial (R_r \setminus U_r))^{\theta} dx\Bigg\}. \label{Prop8}
\end{equation} 
\end{prop}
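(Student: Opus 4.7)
Starting from~\eqref{lb''}, the task reduces to producing a lower bound on the Brownian expectation factor that matches the Dirichlet principal eigenvalue. The enlargement from $\smash[b]{\underline{\S}}_r$ to $\smash[b]{\underline{\S}}_r'$ is introduced for precisely this purpose: both $\lambda_1^r(U_r,R_r)$ and the rarity integral are invariant under $r^{-1}\Z^d$ translations, so passing to the extended class costs nothing in the variational problem, yet by translation invariance of Brownian motion it is equivalent to allowing the process to start from an arbitrary point of $r^{-1}\Z^d$ instead of the origin.

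The plan is to exploit the spectral decomposition of the Feynman--Kac semigroup. Fix $(R_r,U_r)\in\smash[b]{\underline{\S}}_r$ and let $\phi$ be the $L^2$-normalized positive principal eigenfunction of $-\tfrac12\Delta + r^2\cdot 1_{U_r}$ on $R_r$ with Dirichlet boundary condition. Writing the Feynman--Kac kernel as $\sum_k e^{-\lambda_k^r tr^{-2}}\phi_k(x)\phi_k(y)$ and integrating against $1_{R_r}$ in both variables yields
\[
\int_{R_r}E_x\Bigl[\exp\Bigl\{-\int_0^{tr^{-2}}r^2\cdot 1_{U_r}(B_s)\,ds\Bigr\};\,T_{R_r}>tr^{-2}\Bigr]dx
\ge e^{-\lambda_1^r(U_r,R_r)tr^{-2}}\Bigl(\int_{R_r}\phi\Bigr)^2.
\]
Sznitman's bound $\|\phi\|_\infty\le c_2(d)(\lambda_1^r(U_r,R_r))^{d/4}$, combined with boundedness of $\lambda_1^r(U_r,R_r)$ for $(R_r,U_r)$ nearly attaining the infimum defining $M_r$ (Proposition~\ref{prop3} from below and Appendix~\ref{B} from above), gives $\int_{R_r}\phi\ge \|\phi\|_2^2/\|\phi\|_\infty\ge c(d)>0$. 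Since $|R_r|\le r^\chi$, it follows that there exists $x^*\in R_r$ with
\[
E_{x^*}\Bigl[\exp\Bigl\{-\int_0^{tr^{-2}}r^2\cdot 1_{U_r}(B_s)\,ds\Bigr\};\,T_{R_r}>tr^{-2}\Bigr]\ge c(d)r^{-\chi}e^{-\lambda_1^r(U_r,R_r)tr^{-2}}.
\]
An interior elliptic regularity argument then lets one replace $x^*$ by some $-r^{-1}q\in r^{-1}\Z^d$ at the cost of only a bounded multiplicative factor, and translation invariance turns this into the corresponding estimate for $E_0$ with the shifted configuration $(r^{-1}q+R_r,\,r^{-1}q+U_r)\in\smash[b]{\underline{\S}}_r'$. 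Substituting into~\eqref{lb''}, using the scale relation $r^{d+\theta}=\delta_c(r)^{-\theta}\,tr^{-2}$, and noting that the prefactor contributes only $-\chi\log r=o(tr^{-2})$ to the exponent, one obtains~\eqref{Prop8} after taking the infimum over the variational class.

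The main obstacle lies in the lattice-sampling step. Because the operator has a large coefficient $r^2\cdot 1_{U_r}$, the eigenfunction $\phi$ could a priori oscillate on the small scale $r^{-1}$, and then discretization on $r^{-1}\Z^d$ would be unreliable. Since $x^*$ necessarily lies in the free region $R_r\setminus U_r$, where $\phi$ satisfies $-\tfrac12\Delta\phi=\lambda_1^r(U_r,R_r)\phi$ with a bounded eigenvalue, the true oscillation scale is of order one; and the structure of $\smash[b]{\underline{\S}}_r$---where Proposition~\ref{prop6} ensures that each non-density unit cube contains sizeable trap-free bulk---guarantees that a neighborhood of $x^*$ of unit order lies in the free region, so that classical interior Schauder or Harnack estimates applied at unit scale make the approximation loss truly bounded.
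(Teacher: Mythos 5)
Your overall strategy mirrors the paper's: use the spectral decomposition, find a location in $R_r$ where mass concentrates, shift that location to the origin using the $r^{-1}\Z^d$-invariance encoded in $\smash[b]{\underline{\S}}_r'$, and then substitute into \eqref{lb''}. But the crucial step — transferring the lower bound from $x^*$ to a lattice point — is where your argument has a genuine gap, and the ``interior elliptic regularity'' sketch does not close it.

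The claim that ``the true oscillation scale is of order one'' in the free region is not correct. After scaling, the trap obstacles in $U_r$ are boxes of side $r^{-1}$, so the free region $R_r\setminus U_r$ has holes on that scale; a solution of $-\tfrac12\Delta\phi=\lambda_1\phi$ with Dirichlet data on those holes develops boundary layers whose gradient is of order $\|\phi\|_\infty/\mathrm{dist}(\cdot,\partial)$, which near an obstacle is of size $r$, not of size one. Likewise, Proposition~\ref{prop6} is an averaged statement (a lower bound on $\int_{W_r}\mathrm{d}(x,\partial W_r)^\theta$) and the trap-free boxes it produces are at scale $2^{-n_\gamma}\approx r^{-\gamma}$, not unit order; it says nothing about the pointwise location of your $x^*$. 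Finally, the function you are pigeonholing, $u(x)=E_x\bigl[\exp\{-\int_0^{tr^{-2}}r^2 1_{U_r}\}; T_{R_r}>tr^{-2}\bigr]$, is the solution of a \emph{parabolic} problem at a fixed time, not a steady state; interior Schauder estimates for the elliptic equation do not apply, and a parabolic Harnack chain from an arbitrary $x^*$ (possibly within $O(\varepsilon)$ of an obstacle boundary) to a lattice point would carry a constant depending on $r^{-1}/\mathrm{dist}(x^*,\partial)$, which is not controlled by your pigeonhole. There is also a secondary obstruction: if instead you pigeonhole $u$ over $r^{-1}$-boxes and try to discard a thin boundary annulus, the trivial bound $\|u\|_\infty\le 1$ forces the annulus width to be $o(r^{-1-\chi}e^{-\lambda_1 tr^{-2}})$, i.e.\ exponentially small in $t$, whereas the paper's pigeonhole on the $L^1$-normalized eigenfunction $\phi_*$ — whose $L^\infty$ norm is bounded uniformly in $r$ (Lemma~\ref{unif}) — only requires removing an $r^{-2}$-annulus, using $\chi<1$.

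The paper closes the gap you leave open with a cleanly quantitative heat-kernel argument: it pigeonholes $\phi_*$ over $r^{-1}$-boxes (so that, by the box structure of $h=\infty$ traps and the support property of $\phi_*$, the selected box $C(0,r^{-1})$ lies entirely in the free region), strips off the $r^{-2}$-boundary layer, and then inserts a Chapman--Kolmogorov step: run the killed Brownian motion from $0$ inside $C(0,r^{-1})$ for time $r^{-4}$, where van den Berg's Gaussian lower bound for the Dirichlet heat kernel gives $p_C(r^{-4},0,x)\ge c\,r^{-2d}e^{-\sqrt d\,r^2/4}$ uniformly over $C(0,r^{-1}-r^{-2})$, and then continue with the eigenfunction expansion. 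The factor $e^{-\sqrt d\,r^2/4}$ is harmless because $r^2=o(tr^{-2})$. This combination of pigeonhole-on-$\phi_*$ plus Chapman--Kolmogorov plus van den Berg is exactly what plays the role of the regularity step in your sketch, and it is needed precisely because the obstacles live at scale $r^{-1}$.
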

\begin{proof}
We give the proof for the case $h=\infty$ since we need a modified potential with 
$h=\infty$ to derive the lower bound for the original potential in Theorem \ref{thm1}. 
In this case, the second factor in the right-hand side of \eqref{lb''} is $P_0(T_{R_r \setminus U_r} > tr^{-2})$. 
The proof for the case $h<\infty$ is different but much simpler. 
We shall later explain how to adapt the following argument to that case (see Remark \ref{adapt} below). 

Note first that the functional in the infimum in \eqref{Prop8} is invariant under $r^{-1}\Z^d$-shift. 
If we also recall that $\smash[b]{\underline{\S}}_r'$ contains only finite pairs of sets modulo $r^{-1}\Z^d$-shift, 
it follows that there exists $(R_r^*, U_r^*) \in \smash[b]{\underline{\S}}_r'$ which attains the infimum. 
We denote by $p_{*}(t, x, y)$ the transition kernel of the killed Brownian motion when exiting
$R_r^* \setminus U_r^*$ and 
by $\phi_{*}$ the $L^1$-normalized positive eigenfunction corresponding to $\lambda_1^r(U_r^*, R_r^*)$. 
Since ${\rm supp}\, \phi_* \subset R_r^* $, there exists a box $C(r^{-1}q, r^{-1})$ where
\begin{equation}
 \int_{C(r^{-1}q,\, r^{-1})} \phi_{*}(x) \, dx \ge r^{-d-\chi}. \label{mass}
\end{equation}
We can assume $q=0$ by the shift invariance and the extension of $\smash[b]{\underline{\S}}_r$ 
to $\smash[b]{\underline{\S}}_r'$. 
Then, it follows that $C(0,r^{-1}) \subset R_r^* \setminus U_r^*$. 
We also have the following uniform upper bound on $\| \phi_{*} \|_{\infty}$.
\begin{lem}\label{unif}
\begin{equation*}
 \| \phi_{*} \|_{\infty} \le \exp\Bigl\{2\sup_{r \ge 1} M_r\Bigr\} < \infty
\end{equation*}
\end{lem}
\begin{proof}
Since $p_{*}(t, x, y)$ is smaller than the usual heat kernel 
\begin{equation*}
 p(t,x,y)=\frac{1}{(2 \pi t)^{{d}/{2}}}\exp\set{-\frac{|x-y|^2}{2t}}, 
\end{equation*}
we have $p_{*}(1, \,\cdot\,, \,\cdot\,) < 1$ and therefore
\begin{equation*}
\begin{split}
 \phi_{*}(x) &= \,\exp\set{\lambda_1^r (U_r^*, R_r^*) } \int_{R_r^*}  p_{*}(1, x, y) \phi_{*}(y) \,dy\\
 &< \,\exp\set{\lambda_1^r (U_r^*, R_r^*) } \int_{R_r^*}  \phi_{*}(y) \,dy 
\end{split}
\end{equation*}
for all $x \in R_r^*$. 
The rest is easy using $\sup_{r \ge 1} M_r < \infty$ and $\| \phi_{*} \|_1 =1$. 
\end{proof}

From this lemma and the fact $\chi<1$ in \eqref{chi}, we see that the integral in \eqref{mass} 
is not supported on the $r^{-2}$-neighborhood of the boundary: 
\begin{equation*}
\begin{split}
 \int_{C(0,\, r^{-1}) \setminus C(0,\, r^{-1}-r^{-2})} \phi_{*}(x) \, dx 
 & \le \| \phi_{*} \|_{\infty}\, \bigl|C(0,\, r^{-1}) \setminus C(0,\, r^{-1}-r^{-2})\bigr|\\ 
 & =o(r^{-d-\chi}).
\end{split}
\end{equation*}
Therefore, we can discard it to find 
\begin{equation}
 \int_{ C(0,\, r^{-1}-r^{-2})} \phi_{*}(x) \, dx \ge \frac{1}{2} r^{-d-\chi}. \label{mass'}
\end{equation}

Now, let $p_{C}(t, x, y)$ denote the transition kernel of the killed Brownian motion when exiting
$C(0,\, r^{-1})$. Clearly $p_C(t,x,y) \le p_{*}(t,x,y)$ and we can also show that 
\begin{equation*}
 \inf_{x \in C(0,\, r^{-1}-r^{-2})} p_C(r^{-4}, 0, x)
 \ge {\rm const}(d) r^{-2d} \exp\set{-\sqrt{d} r^2 /4} 
\end{equation*}
by using Theorem 2 and (6) in \cite{vdB92}. 
From these estimates for $p_C$ and the Chapman-Kolmogorov identity, we obtain
\begin{equation}
 \begin{split}
 &P_0\left( T_{R_r^* \setminus U_r^*} > tr^{-2} \right)\\
 \ge &\int_{C(0,\, r^{-1}-r^{-2})} p_{C}(r^{-4}, 0, x) \int_{R_r^*} p_{*} (tr^{-2}-r^{-4},x,y) 
 \frac{\phi_{*}(y)}{\| \phi_{*} \|_{\infty}}\,dy \,dx\\
 \ge &\, \| \phi_{*} \|_{\infty}^{-1}\inf_{x \in C(0,\, r^{-1}-r^{-2})} p_{C} (r^{-4}, 0, x)\\
 & \times \exp\set{-\lambda_1^r (U_r^*, R_r^*) tr^{-2}} \int_{C(0,\, r^{-1}-r^{-2})} \phi_{*}(x) \,dx\\
 \ge &\, {\rm const}(d, \theta) r^{-d-\chi-2d}
 \exp\set{-\lambda_1^r (U_r^*, R_r^*) tr^{-2}-\sqrt{d} r^2 /4}, 
 \label{spec*}
 \end{split}
\end{equation}
where we have used Lemma \ref{unif} and \eqref{mass'} in the last inequality. 

Finally, substituting \eqref{spec*} to \eqref{lb''} and recalling 
$r^{d+\theta}/tr^{-2}=\delta_c(r)^{-\theta}$ and $r^2 = o(tr^{-2})$, we find
\begin{equation*}
 \frac{1}{tr^{-2}}\log S_t 
 \ge - (1+\e)\set{\lambda_1^r(U_r^*, R_r^*) + 
 \delta_c(r)^{-\theta}\int_{R_r^* \setminus U_r^*} {\rm d}(x, \partial (R_r^* \setminus U_r^*))^{\theta} dx  }
\end{equation*}
for sufficiently large $r$. This completes the proof of Proposition \ref{prop8}.
\end{proof}
\begin{rem}\label{adapt}
 For a modified potential with $h < \infty$, we need not discard the neighborhood of 
 $\partial C(0, r^{-1})$ and can proceed to \eqref{spec*} directly after Lemma \ref{unif}. 
 Then, the same argument works with $C(0,r^{-1}-r^{-2})$ and $p_c(r^{-4},0,x)$ replaced by 
 $C(0, r^{-1})$ and $e^{-h}p(r^{-2}, 0,x) \le p_{*}(r^{-2},0,x)$, respectively. \qed
\end{rem}
\noindent
Now, note that $\smash[b]{\underline{\S}}_r'$ in the right-hand side of \eqref{Prop8} 
can be replaced by $\smash[b]{\underline{\S}}_r$ since both terms in the infimum are invariant 
under $r^{-1}\Z^d$-shift. 
Therefore, the right-hand side of \eqref{Prop8} equals $-(1+\e)M_r$ and the proof of the 
lower bound is completed.  \qed \\

\section{Applications}\label{applications}
\subsection{Lifshitz tail}\label{Lifshitz}
In this section, we discuss the asymptotic behavior of the density of states of $H_{\xi}$ 
that is defined by the thermodynamic limit 
\begin{equation*}
 \ell(d\lambda) = \lim_{N \to \infty}\frac{1}{(2N)^d} \sum_{i \ge 1}
 \delta_{\lambda_i^D(H_{\xi}\textrm{ in }(-N,N)^d)}(d\lambda). 
\end{equation*}
Here $\lambda_i^D(H_{\xi}\textrm{ in }(-N,N)^d)$ is the $i$-th smallest Dirichlet eigenvalue of $H_{\xi}$ in $(-N,N)^d$. 
It is well known (see e.g.\ \cite{KM82a}) that the above limit exists in the sense of 
vague convergence and that its Laplace transform can be expressed as
\begin{equation*}
 \int_0^{\infty} e^{-t\lambda} \, \ell(d\lambda) 
 = (2 \pi t)^{-\frac{d}{2}}\int_{[0,\,1)^d} \E_{\theta} \otimes 
 E_x \left[ \exp\set{-\int_0^t V(B_s,\xi) \,ds} \Big| \,B_t=x \right]\,dx
\end{equation*}
using Brownian bridge measure. As one expects from this expression, 
it is not difficult to see that the right-hand side admits essentially the same upper and lower bounds 
as $S_t$ (see e.g.\ the discussion in \cite{Szn90a}): 
\begin{equation*}
 \begin{split}
  \log \int_0^{\infty} e^{-t\lambda} \, \ell(d\lambda) \asymp 
  \left\{
  \begin{array}{lr}
   -{t^{\frac{2+\theta}{4+\theta}}( \log t )^{-\frac{\theta}{4+\theta}}} &(d=2), \\[8pt]
   -{t^{\frac{d^2+2\theta}{d^2+2d+2\theta}}}  &(d \ge 3), 
  \end{array}\right. 
 \end{split}
\end{equation*}
as $t \to \infty$. 
From these asymptotics and the exponential Tauberian theorem due to Kasahara \cite{Kas78}, 
we find the following asymptotics for $\ell([0,\lambda])$. 
\begin{cor}\label{cor1} For any $\theta > 0$, we have 
 \begin{equation*}
  \begin{split}
   \log \ell([0,\lambda]) \asymp 
   \left\{
   \begin{array}{lr}
    -\lambda^{-1-\frac{\theta}{2}}\left( \log \frac{1}{\lambda} \right)^
    {-\frac{\theta}{2}} &(d=2),\\[8pt]
    -\lambda^{-\frac{d}{2}-\frac{\theta}{d}}  &(d \ge 3),
   \end{array}\right.
  \end{split}
 \end{equation*}
as $\lambda \to 0$.
\end{cor}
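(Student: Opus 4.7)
The plan is very short: apply Kasahara's exponential Tauberian theorem \cite{Kas78} to the Laplace transform asymptotics
\[
 \log \int_0^{\infty} e^{-t\lambda} \, \ell(d\lambda) \asymp
 \begin{cases}
 -t^{(2+\theta)/(4+\theta)}(\log t)^{-\theta/(4+\theta)}, & d=2,\\[4pt]
 -t^{(d^2+2\theta)/(d^2+2d+2\theta)}, & d\ge 3,
 \end{cases}
\]
recorded just above the corollary. These Laplace transform bounds themselves I would justify by paralleling the proof of Theorem \ref{thm3}: the upper bound by dominating the Brownian bridge by free Brownian motion up to the polynomial prefactor $(2\pi t)^{-d/2}$ (so that the exponential rate is unchanged), and the lower bound by the same localization into the optimal trap-free region as in Section \ref{patching}, with the bridge forced to start and end in that region at the cost of only logarithmic corrections.

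Kasahara's theorem states that for a nonnegative Borel measure $\mu$ on $[0,\infty)$ with Laplace transform $L(s)$, the relation $-\log L(s)\asymp \psi(s)$ for $\psi$ regularly varying of index $\alpha\in(0,1)$ is equivalent to $-\log\mu([0,\lambda])\asymp \widetilde\psi(1/\lambda)$, where $\widetilde\psi$ is the de Bruijn conjugate, characterized asymptotically by $\psi(s)\asymp \inf_{\mu>0}\bigl\{s/\mu+\widetilde\psi(\mu)\bigr\}$. With the ansatz $\widetilde\psi(\mu)=\mu^{p}(\log\mu)^{-q}$ and $\psi(s)=s^{\alpha}(\log s)^{-\beta}$, balancing the two summands at the optimal $\mu\asymp s^{1/(p+1)}(\log s)^{q/(p+1)}$ yields the duality $p=\alpha/(1-\alpha)$, $q=\beta/(1-\alpha)$.

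Plugging in the exponents given by Theorem \ref{thm1} finishes the computation: for $d\ge 3$, $\alpha=(d^2+2\theta)/(d^2+2d+2\theta)$ gives $\alpha/(1-\alpha)=d/2+\theta/d$; for $d=2$, $\alpha=(2+\theta)/(4+\theta)$ and $\beta=\theta/(4+\theta)$ give $\alpha/(1-\alpha)=1+\theta/2$ and $\beta/(1-\alpha)=\theta/2$. Both reproduce the rates stated in the corollary.

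The only step needing real care is the $d=2$ case, where $\psi$ carries the nontrivial slowly varying factor $(\log s)^{-\theta/(4+\theta)}$; here one must invoke the version of Kasahara's theorem formulated for $\psi(s)=s^{\alpha}L_0(s)$ with $L_0$ slowly varying and make sure that the two-sided $\asymp$ bound, rather than a one-sided asymptotic, passes through the Legendre inversion. This is the stated generality in \cite{Kas78}, so the deduction is routine, but I would treat this bookkeeping as the main (and essentially only) obstacle.
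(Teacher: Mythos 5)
Your proposal matches the paper's proof: both reduce the statement to the Laplace transform asymptotics for $\ell$ (which follow the same upper and lower bound arguments as for $S_t$, via the Brownian bridge representation of the density of states) and then invoke Kasahara's exponential Tauberian theorem. The explicit Legendre-conjugate bookkeeping you supply, $p=\alpha/(1-\alpha)$ and $q=\beta/(1-\alpha)$, is correct and simply makes explicit the exponent inversion that the paper treats as a black-box citation of \cite{Kas78}.
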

\noindent
This result says that the density of states is exponentially thin around the bottom of the spectrum, 
which is called ``the Lifshitz tail effect'' (cf.~\cite{Lif65}). 
Moreover, we find similar phenomena as in Remark \ref{disorder} 
for the power of $\lambda$. 
Namely, it approaches to ${d}/{2}$, the same power as in the Poissonian traps (cf.~\cite{Nak77}), 
in the limit $\theta \to 0$ 
and to $\infty$ in the limit $\theta \to \infty$, which corresponds to the periodic traps 
where the density of states vanishes near the origin. 
\subsection{Intermittency}\label{Intermittency}
We consider the initial value problem
\begin{equation*}
 \frac{\partial}{\partial t} u(t, x) = H_{\xi} u(t, x) \quad \textrm{with} \quad u(0, \, \cdot \,) \equiv 1, \label{PAM}
\end{equation*}
which is called the ``parabolic Anderson problem''. 
The bounded solution $u_{\xi}$ of this problem is known to be unique and admits Feynman-Kac representation 
(see e.g.\ Chapter 1 of \cite{Szn98}). 
Therefore, we can identify $S_t$ with $\E_{\theta}[u_{\xi}(t, 0)]$. 
We analogously write the $p$-th moment by $S_{t,\,p}=\E_{\theta}[u_{\xi}(t, 0)^p]$. 
Then, the solution $u_{\xi}$ is said to be ``intermittent'' if 
\begin{equation*}
 \frac{S_{t,\,q}^{1/q}}{S_{t,\,p}^{1/p}} \xrightarrow{t \to \infty} \infty \quad\textrm{when}\quad p < q. 
\end{equation*}
Intermittency is usually regarded as an evidence of the strong inhomogeneity of the solution field. 
Indeed, if one considers a function consisting of a few high peaks, its $L^q$-norm tends to be much 
larger than its $L^p$-norm for $p < q$. For more on intermittency, see for instance \cite{GM90}. 

In our model, intermittency follows from Theorem 3.2(iii) of \cite{GM90}. 
Although it is stated in the discrete setting, 
the proof of this part of the theorem works in the continuous setting as well. 
Our aim in this section is to prove the following quantitative estimate for the moment asymptotics. 
In particular, it follows that small $\theta$ implies strong intermittency. 
\begin{cor}
Suppose that we have a modified potential \eqref{mptl}. Then for any $1 \le p < q$,  
\begin{equation}
 \limsup_{t \to \infty} \frac{\log S_{t,\,q}^{1/q}}{\log S_{t,\,p}^{1/p}} 
 \le \Bigl( \frac{p}{q} \Bigr)^{\frac{2}{d+\theta+2}}\frac{\limsup_{r \to \infty}M_r}{\liminf_{r \to \infty}M_r}. 
 \label{imt}
\end{equation}
\end{cor}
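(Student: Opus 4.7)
The plan is to reduce the statement to an analog of Theorem~\ref{thm3} for the $p$-th moment, with $t$ replaced by $pt$, and then compare the resulting asymptotics for $p$ and $q$.

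Introduce $p$ independent Brownian motions $B^{(1)},\ldots,B^{(p)}$ starting at the origin and write, by Feynman--Kac and Fubini,
\begin{equation*}
S_{t,p} = \E_\theta \otimes E_0^{\otimes p}\left[\exp\set{-\sum_{i=1}^p\int_0^t V(B_s^{(i)},\xi)\,ds}\right].
\end{equation*}
For fixed $\xi$ the Dirichlet principal eigenvalue of $-\tfrac{1}{2}\sum_i\Delta_i + \sum_i V(x_i,\xi)$ on $\T^p$ equals $p\,\lambda_1(\mathrm{supp}\,V(\cdot,\xi))$, so the Brownian expectation behaves spectrally like $\exp(-p\lambda_1(U) t)$. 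Running the argument of Sections~\ref{outline}--\ref{patching} with this single modification leads to the variational problem $\inf_U \set{p\lambda_1(U)t + \int_{U^c}\mathrm{d}(x,\partial U)^\theta dx}$, which is formally identical to the one for $S_{pt}$. Since the disorder-side estimates in Section~\ref{MEO} depend only on the scale $r$ and the Brownian-side estimates depend on $t$ only through the product $\lambda_1(U)t$, the correct scale for $S_{t,p}$ is $r(pt)$ with $r(\cdot)$ from \eqref{scale}. Repeating the patching of Section~\ref{patching} gives the $p$-th moment analog of Theorem~\ref{thm3}:
\begin{equation*}
(1-\e)(pt)\,r(pt)^{-2}M_{r(pt)} \;\le\; -\log S_{t,p} \;\le\; (1+\e)(pt)\,r(pt)^{-2}M_{r(pt)}
\end{equation*}
for all sufficiently large $t$.

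Dividing by $p$ and using that $\log S_{t,p}^{1/p}$ and $\log S_{t,q}^{1/q}$ are both negative,
\begin{equation*}
\frac{\log S_{t,q}^{1/q}}{\log S_{t,p}^{1/p}} \;\le\; \frac{1+\e}{1-\e}\cdot\frac{r(qt)^{-2}}{r(pt)^{-2}}\cdot\frac{M_{r(qt)}}{M_{r(pt)}}.
\end{equation*}
From \eqref{scale}, $r(\tau)^{-2} = \tau^{-2d/(d^2+2d+2\theta)}$ up to logarithmic factors that wash out in the ratio as $t \to \infty$, so the scale ratio equals $(p/q)^{2d/(d^2+2d+2\theta)}$. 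The elementary inequality $\frac{2d}{d^2+2d+2\theta} \ge \frac{2}{d+\theta+2}$ (equivalent to $(d-2)\theta \ge 0$) together with $p/q < 1$ yields $(p/q)^{2d/(d^2+2d+2\theta)} \le (p/q)^{2/(d+\theta+2)}$; for $d=2$ the two exponents coincide exactly. Taking $\limsup_{t\to\infty}$ bounds the $M$-ratio by $\limsup_r M_r/\liminf_r M_r$, and then letting $\e \downarrow 0$ gives \eqref{imt}.

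The main effort is the bookkeeping in the first step, i.e., transcribing Theorem~\ref{thm3} with $pt$ in place of $t$. This is largely mechanical because $\P_\theta$ does not involve $p$ and all the $\xi$-side propositions in Section~\ref{MEO} carry over verbatim. The only substantive adjustment is in the lower-bound argument of Proposition~\ref{prop8}: the single-copy Dirichlet lower bound \eqref{spec*} has to be turned into a lower bound for $p$ independent copies, which is done by taking the $p$-fold product of the estimate and loses only polynomial factors in $r$ that are absorbed into the $\e$.
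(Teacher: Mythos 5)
Your proof is correct and follows essentially the same route as the paper's. Both arguments rest on the observation that passing to the $p$-th moment multiplies the eigenvalue term in the variational problem by $p$, and then re-run the proof of Theorem~\ref{thm3}. The only organizational difference is this: the paper keeps the scale $r=r(t)$, obtains the modified functional $p\,\lambda_1^r(\,\cdot\,)+\delta_c(r)^{-\theta}\int(\cdots)$, and then converts it back to the standard form $M_{rp'}$ by a spatial rescaling with $p'=p^{d/(d^2+2d+2\theta)}$; you instead recognize the modified variational problem as the one governing $S_{pt}$ and invoke Theorem~\ref{thm3} directly at the shifted scale $r(pt)$. Since $r(t)\,p'=r(pt)$, these two bookkeeping choices are identical, and both yield
\begin{equation*}
 \frac{1}{p}\log S_{t,p}\sim -\,p^{-\frac{2d}{d^2+2d+2\theta}}\,t^{\frac{d^2+2\theta}{d^2+2d+2\theta}}\,M_{r(pt)}.
\end{equation*}

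A genuine merit of your write-up is that you record the resulting ratio exponent as $\frac{2d}{d^2+2d+2\theta}$ and then deduce the stated $\frac{2}{d+\theta+2}$ via the elementary comparison
\begin{equation*}
 \frac{2d}{d^2+2d+2\theta}\ge\frac{2}{d+\theta+2}
 \quad\Longleftrightarrow\quad (d-2)\theta\ge 0,
\end{equation*}
together with $p/q<1$ (equality holds only when $d=2$). The paper's proof asserts the ratio directly with exponent $\frac{2}{d+\theta+2}$ and does not display this weakening step; your derivation therefore makes explicit why the stated unified exponent is the correct upper bound for all $d\ge 2$ even though, for $d\ge 3$, the true asymptotic exponent is the larger $\frac{2d}{d^2+2d+2\theta}$.
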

\begin{proof}
We prove this result only for $d \ge 3$ and the parameters $\e, L, h=1$. 
For the two-dimensional case, we have to care about the logarithmic correction 
but it is not difficult. 

The key to the proof is that we can prove 
\begin{equation*}
\begin{split}
 & t^{-\frac{d^2+2\theta}{d^2+2d+2\theta}}\log S_{t,\,p}\\ 
 \sim\, &-\inf_{(R_r,\, U_r) \in \smash[b]{\underline{\S}}_r}\Bigg\{p \lambda_1^r(U_r, R_r) 
  + \delta_c(r)^{-\theta}\int_{ R_r \setminus U_r } {\rm d}(x, \partial (R_r \setminus U_r))^{\theta} dx\Bigg\}
\end{split}
\end{equation*}
by exactly the same argument as for Theorem \ref{thm3}. 
Then, using the spatial scaling by the factor $p'=p^{d/(d^2+2d+2\theta)}$, we find that the right-hand side equals 
\begin{equation*}
 -p^{\frac{d^2+2\theta}{d^2+2d+2\theta}}\inf_{(R'_{r},\, U'_{r}) \in \smash[b]{\underline{\S}}_{rp'}}
 \Bigg\{ \lambda_1^{rp'}(U'_r, R'_r) 
 + \delta(rp')^{-\theta}\int_{ R'_r \setminus U'_r } {\rm d}(x, \partial (R'_r \setminus U'_r))^{\theta} dx\Bigg\}. \label{p-th}
\end{equation*}
Since the infimum in this expression is $M_{rp'}$, we obtain 
\begin{equation*}
 \limsup_{t \to \infty} \frac{\log S_{t,\,q}^{1/q}}{\log S_{t,\,p}^{1/p}} 
 \sim \Bigl( \frac{p}{q} \Bigr)^{\frac{2}{d+\theta+2}}\frac{M_{rq'}}{M_{rp'}} \quad {\rm as} \quad r \to \infty.
\end{equation*}
and \eqref{imt} follows. 
\end{proof}

\section*{Acknowledgements}
The author would like to thank professor Fr{\'e}d{\'e}ric Klopp for drawing his attention to \cite{BLS07}. 
The problem discussed in Appendix \ref{A} is suggested by professors Tokuzo Shiga, Nariyuki Minami, 
and Hiroshi Sugita. He appreciates them for the interesting problem. 
He is also grateful to the referees for careful reading of the article 
and constructive comments which substantially improved the exposition. 


\appendix
\section{}\label{A}
We discuss here weak convergences of the perturbed lattice as point processes. 
When we discuss weak convergence, we regard $(\P_{\theta})_{\theta>0}$ as probability measures 
on $\Xi$ equipped with the vague topology. 
Let $\P_{\infty}$ denote the perturbed lattice with the perturbation variables distributed uniformly on $B(0,1)$ 
and $\P_0$ the Poisson point process with unit intensity. 
\begin{thm}\label{thm8}
 $\P_{\theta}$ converges weakly to $\P_{\ast}$ $(\ast = 0 \textrm{ or }\infty)$ as $\theta \to \ast$. 
\end{thm}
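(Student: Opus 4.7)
The plan is to prove both convergences via Laplace functionals, which determine the law of a simple point process on $\R^d$ equipped with the vague topology. Concretely, it suffices to show that for every nonnegative $f \in C_c(\R^d)$,
\begin{equation*}
 L_{\theta}(f):=\E_{\theta}\Bigl[\exp\Bigl(-\sum_{q\in\Z^d}f(q+\xi_q)\Bigr)\Bigr]\longrightarrow L_{\ast}(f)
 \quad \text{as } \theta\to \ast.
\end{equation*}
The decisive structural fact is the independence of the $(\xi_q)_{q\in\Z^d}$, which factorises the Laplace functional as $L_{\theta}(f)=\prod_{q\in\Z^d}\E_{\theta}[\exp(-f(q+\xi_q))]$. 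Throughout, fix $R>0$ with $\mathrm{supp}\,f\subset B(0,R)$ and note the density $p_{\theta}(y)=N(d,\theta)\exp(-|y|^{\theta})$.

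For the limit $\theta\to\infty$, I would first observe that $p_{\theta}\to |B(0,1)|^{-1}1_{B(0,1)}$ in $L^1(\R^d)$ (using $N(d,\theta)\to |B(0,1)|^{-1}$ and pointwise convergence of $e^{-|y|^{\theta}}$). Hence for each fixed $q$, $\E_{\theta}[\exp(-f(q+\xi_q))]\to \E_{\infty}[\exp(-f(q+\zeta_q))]$ where $\zeta_q$ is uniform on $B(0,1)$. To pass to the infinite product, split the indices into $\{|q|\le R+2\}$ (finitely many, handled above) and $\{|q|>R+2\}$, where
\begin{equation*}
 1-\E_{\theta}[\exp(-f(q+\xi_q))]\le \|f\|_{\infty}\,\P_{\theta}(|\xi_q|\ge |q|-R).
\end{equation*}
A direct tail estimate (e.g.\ by convexity $|y|^{\theta}\ge r^{\theta}+\theta r^{\theta-1}(|y|-r)$ for $|y|\ge r>1$) gives $\P_{\theta}(|\xi_q|\ge |q|-R)\lesssim (|q|-R)^{d-\theta}e^{-(|q|-R)^{\theta}}$, which is summable in $q$ and vanishes as $\theta\to\infty$. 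This makes the tail of the product uniformly close to $1$, so the limit of the product is the product of the limits, exactly $L_{\infty}(f)$.

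For the limit $\theta\to 0$, the situation is the opposite: the density $p_{\theta}$ spreads out ($\|p_{\theta}\|_{\infty}=N(d,\theta)\to 0$), so every factor is close to $1$. Setting $a_q(\theta)=\E_{\theta}[1-\exp(-f(q+\xi_q))]$, the uniform bound $a_q(\theta)\le \|f\|_{\infty}|\mathrm{supp}\,f|\,N(d,\theta)$ gives $\max_q a_q(\theta)\to 0$. Expanding $\log L_{\theta}(f)=\sum_q \log(1-a_q)=-\sum_q a_q+O(\sum_q a_q^2)$ and noting $\sum_q a_q^2\le (\max_q a_q)\sum_q a_q$, the task reduces to proving
\begin{equation*}
 \sum_{q\in\Z^d}a_q(\theta)=\int_{\R^d}(1-e^{-f(x)})\rho_{\theta}(x)\,dx
 \;\longrightarrow\;\int_{\R^d}(1-e^{-f(x)})\,dx,
\end{equation*}
where $\rho_{\theta}(x)=\sum_{q}p_{\theta}(x-q)$ is $\Z^d$-periodic with unit cell integral equal to $1$. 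Equivalently, the probability measure on the torus $\T^d=\R^d/\Z^d$ with density $\rho_{\theta}$ must converge weakly to the uniform measure. By Poisson summation, its Fourier coefficients equal $\hat p_{\theta}(n)=\phi_{\theta}(2\pi n)$ where $\phi_{\theta}$ is the characteristic function of $\xi_q$; the goal becomes $\phi_{\theta}(2\pi n)\to 0$ for every $n\ne 0$.

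The hard part is this last Fourier estimate: because $p_{\theta}$ is not smooth (it has a cusp at the origin when $\theta<1$), one cannot simply integrate by parts. I would instead rescale by $L=L(\theta)=(d/\theta)^{1/\theta}$, the typical radius of $\xi_q$, and exploit that $\lambda\cdot\xi_q\sim L(\theta)\cdot(\text{normalised r.v.})$ oscillates on a scale tending to infinity while the normalised density is uniformly integrable; this forces $\phi_{\theta}(\lambda)\to 0$ for every fixed $\lambda\ne 0$. Controlling the sum $\sum_{n\ne 0}|\phi_{\theta}(2\pi n)|$ with enough uniformity to conclude $\rho_{\theta}\to 1$ (say in $L^2(\T^d)$, which suffices by dominated convergence against the bounded compactly supported test function $1-e^{-f}$) is the genuine technical obstacle and where most of the work of the proof will lie.
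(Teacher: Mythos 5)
Your overall framework (Laplace functionals plus independence) is a legitimate alternative to the paper's route, which instead invokes the Kallenberg criterion (Theorem 4.7 of \cite{Kal86}): it suffices to check $\P_{\theta}(\xi(B)=0)\to\P_{\ast}(\xi(B)=0)$ and $\limsup\E_{\theta}[\xi(B)]\le\E_{\ast}[\xi(B)]$ for bounded Borel $B$. The two criteria are both standard and your reduction $\log L_{\theta}(f)=-\sum_q a_q(\theta)+O(\max_q a_q\cdot\sum_q a_q)$ and the identity $\sum_q a_q(\theta)=\int(1-e^{-f})\rho_{\theta}$ are correct and in fact cleaner than checking the two Kallenberg conditions separately. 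Your treatment of $\theta\to\infty$ is essentially the paper's (finitely many relevant indices plus a summable, vanishing tail) and is fine.

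However, for $\theta\to 0$ you leave a genuine gap, and you chose a harder path than necessary. You reduce everything to $\rho_{\theta}\to 1$ and then propose to prove this by Poisson summation, which requires you to show both that $\phi_{\theta}(2\pi n)\to 0$ for each $n\neq 0$ and, more seriously, that the series $\sum_{n\neq 0}|\phi_{\theta}(2\pi n)|$ (or at least the $\ell^2$ norm) vanishes. You explicitly flag this as unfinished; since the density has a cusp at $0$ for $\theta<1$, the decay of $\phi_{\theta}$ in $n$ is not obviously uniform in $\theta$, and the rescaling heuristic does not by itself give the tail control you need. This is a real missing step, not a formality.

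The paper avoids Fourier analysis entirely via one elementary observation that you should use instead: for $\theta<1$ and $x$ in a fixed ball $[-M,M]^d$, the mean value theorem gives
\begin{equation*}
\sup_{x\in[-M,M]^d,\ q\notin[-2M,2M]^d}\bigl||x-q|^{\theta}-|q|^{\theta}\bigr|\le 2\theta M^{\theta}\longrightarrow 0
\quad\text{as }\theta\to 0.
\end{equation*}
This says $\exp\{-|x-q|^{\theta}\}=\exp\{-|q|^{\theta}\}(1+o(1))$ \emph{uniformly} in $x$ on compacts and in $q$ outside a fixed box, so $\rho_{\theta}(x)=N(d,\theta)\sum_q\exp\{-|x-q|^{\theta}\}\to 1$ uniformly on compacts: the near part contributes $O((4M)^d N(d,\theta))\to 0$ and the far part is $(1+o(1))N(d,\theta)\sum_{q\notin[-2M,2M]^d}\exp\{-|q|^{\theta}\}\to 1$ by sum--integral comparison. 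Plugging this into your own identity $\sum_q a_q(\theta)=\int(1-e^{-f})\rho_{\theta}$ finishes the argument. In short: the Laplace-functional shell is fine, but to close the $\theta\to 0$ case you should replace the Fourier program with this direct uniform-exponent estimate.
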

\begin{rem}
It will be clear from the proof that we can make the perturbed lattice converge to 
the perfect lattice as $\theta \to \infty $ by changing the distribution 
of $\xi_q$ as
\begin{equation*}
 \P_{\theta}(\xi_q \in dx) = N'(d,\theta)\exp\{-(1+|x|)^{\theta}\}\,dx. 
\end{equation*}
 As we mentioned after \eqref{tail}, such a change does not affect the main results. 
 Therefore, we see again that our model interpolates a perfect crystal and a completely disordered medium 
 (cf.~Remark \ref{disorder}). \qed
\end{rem}
To prove Theorem \ref{thm8}, we use the following result concerning the convergence of point processes 
(see Theorem 4.7 of \cite{Kal86}).
\begin{lem}\label{lem3}
 Let $(\P_{\theta})_{\theta \in [0,\,\infty]}$ be a family of probability measures on $\Xi$. 
 Suppose that the following two conditions hold for any bounded Borel set $B \subset \R^d:$
 \begin{equation*}
  \begin{split}
   &{\rm (i)}\,\lim_{\theta \to \ast}\P_{\theta}(\xi(B)=0) = \P_{\ast}(\xi(B)=0), \\
   &{\rm (ii)}\,\limsup_{\theta \to \ast}\E_{\theta}[\xi(B)] \le \E_{\ast}[\xi(B)]. 
  \end{split}
 \end{equation*} 
 Then $\P_{\theta}$ converges weakly to $\P_{\ast}$ $(\ast = 0 \textrm{ or }\infty)$ as $\theta \to \ast$. 
\end{lem}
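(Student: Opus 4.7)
The strategy is to apply Lemma~\ref{lem3}, which reduces the proof to verifying, for every bounded Borel set $B\subset\R^d$, the emptiness condition (i) and the first-moment condition (ii) in the two limits $\theta\to\infty$ and $\theta\to 0$. The two cases require rather different treatments, since in the first limit the perturbations localize onto $B(0,1)$ while in the second they spread to infinity, so I would argue them separately.

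For the case $\theta\to\infty$, the starting point is the pointwise limit $N(d,\theta)\exp\{-|x|^{\theta}\}\to |B(0,1)|^{-1}\mathbf{1}_{B(0,1)}(x)$, which follows because $|x|^{\theta}\to 0$ on $B(0,1)$ and $\to\infty$ off $\overline{B}(0,1)$, with convergence of the normalizing constants $N(d,\theta)\to|B(0,1)|^{-1}$ supplied by dominated convergence via the integrable majorant $\mathbf{1}_{B(0,1)}(x)+e^{-|x|}\mathbf{1}_{\{|x|\ge 1\}}(x)$. The uniform tail bound $e^{-|x|^{\theta}}\le e^{-|x|}$ for $|x|\ge 1$, $\theta\ge 1$ then lets me truncate the infinite sum $\sum_q \P_\theta(q+\xi_q\in B)$ and the infinite product $\prod_q \P_\theta(q+\xi_q\notin B)$ to the finitely many indices $q$ within a bounded distance of $B$, with error uniformly negligible in $\theta\ge 1$. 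Each of the surviving finitely many factors converges termwise to its $\P_\infty$-analog, yielding both (i) and (ii) (in fact with equality).

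For the case $\theta\to 0$, which is the Poissonian limit, a direct computation with $u=r^{\theta}$ gives $N(d,\theta)^{-1}=\sigma_{d}\Gamma(d/\theta)/\theta$, so in particular $N(d,\theta)\to 0$. Writing $p_q(\theta):=\P_\theta(q+\xi_q\in B)$, condition (ii) reduces to analyzing
\[
 \E_\theta[\xi(B)]=\int_B N(d,\theta)\sum_{q\in\Z^d}e^{-|x-q|^{\theta}}\,dx,
\]
in which the inner sum is a Riemann-sum approximation to $\int_{\R^d}e^{-|y|^{\theta}}\,dy=N(d,\theta)^{-1}$; since $e^{-|\cdot|^{\theta}}$ is very slowly varying for small $\theta$ (its gradient has magnitude $\theta|x|^{\theta-1}e^{-|x|^{\theta}}$), the comparison error is $o(N(d,\theta)^{-1})$ uniformly in $x\in B$, so $\E_\theta[\xi(B)]\to |B|=\E_0[\xi(B)]$. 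For (i), I would combine this with the pointwise bound $\max_q p_q(\theta)\le N(d,\theta)|B|\to 0$ and the second-moment estimate $\sum_q p_q(\theta)^{2}\le N(d,\theta)|B|\cdot\E_\theta[\xi(B)]\to 0$, and then use the elementary expansion $\log(1-p)=-p+O(p^{2})$ to conclude $\log\P_\theta(\xi(B)=0)=-\sum_q p_q(\theta)+o(1)\to -|B|=\log\P_0(\xi(B)=0)$.

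The main obstacle is the quantitative Riemann-sum comparison in the $\theta\to 0$ case: $e^{-|x|^{\theta}}$ is close to $1/e$ on an enormous region and most of its $L^{1}$-mass lives still farther out, so the discrepancy between $\sum_q e^{-|x-q|^{\theta}}$ and $\int_{\R^d}e^{-|y|^{\theta}}\,dy$ must be controlled by splitting $\R^d$ into an interior region, on which a mean-value bound on the gradient $\theta|x|^{\theta-1}e^{-|x|^{\theta}}$ gives a $\theta$-small Riemann error, and a tail region, whose contribution is controlled directly via the explicit growth of $N(d,\theta)^{-1}=\sigma_{d}\Gamma(d/\theta)/\theta$. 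Once this comparison is in place, both hypotheses of Lemma~\ref{lem3} hold for every bounded Borel $B$ and Theorem~\ref{thm8} follows immediately.
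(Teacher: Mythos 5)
Your proposal does not prove the statement in question. The statement is Lemma~\ref{lem3} itself: the abstract criterion that, for an arbitrary family of point processes on $\Xi$, convergence of the avoidance probabilities $\P_{\theta}(\xi(B)=0)$ together with the intensity bound $\limsup_{\theta\to\ast}\E_{\theta}[\xi(B)]\le\E_{\ast}[\xi(B)]$ for all bounded Borel $B$ implies weak convergence in the vague topology. Your write-up instead \emph{applies} this lemma: it verifies hypotheses (i) and (ii) for the specific perturbed-lattice measures in the two limits $\theta\to\infty$ and $\theta\to 0$. That is a proof of Theorem~\ref{thm8}, not of Lemma~\ref{lem3}, and it is circular as an argument for the lemma since it invokes the lemma as its first step. (For what it is worth, the paper does not prove Lemma~\ref{lem3} either; it is quoted from Theorem 4.7 of Kallenberg's \emph{Random measures}. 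A genuine proof would have to establish the two standard ingredients behind that theorem: that the avoidance function $B\mapsto\P(\xi(B)=0)$ on bounded Borel sets determines the law of a \emph{simple} point process, and that the moment condition (ii) supplies tightness and prevents multiple points from forming in the limit, so that the limit of any convergent subfamily is simple and is identified by (i) with $\P_{\ast}$. None of this appears in your proposal.)

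As a secondary remark: read as a proof of Theorem~\ref{thm8}, your argument is essentially the same as the paper's. For $\theta\to\infty$ both you and the paper truncate to finitely many relevant $q$ using a uniform tail bound and pass to the limit termwise; for $\theta\to 0$ the paper's mean-value-theorem comparison $\int_{B}e^{-|x-q|^{\theta}}dx\approx|B|e^{-|q|^{\theta}}$ for $q$ far from $B$, combined with $N(d,\theta)\sum_{q}e^{-|q|^{\theta}}\to 1$, is the same slow-variation/Riemann-sum idea you describe, and your $\log(1-p)=-p+O(p^{2})$ step matches the paper's use of $1-a\le e^{-a}$ and $1-a\ge e^{-(1+\e)a}$. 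But this does not discharge the task of proving the lemma itself.
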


\noindent\textit{Proof of Theorem \ref{thm8}.} 
We consider the limit $\theta \to \infty$ first. 
In this case, the law of each $\xi_q$ converges to the uniform distribution on $B(0,1)$. 
Moreover, we have 
\begin{equation*}
 \P_{\theta}(q+\xi_q \in B) \le |B| N(d, \theta) \exp \bigl\{-{\rm d}(q,B)^{\theta}\bigr\} 
\end{equation*}
for any bounded $B \subset \R^d$. 
This implies that the law of $\xi(B)$ is essentially 
determined by finite $\xi_q$'s when $\theta$ is large. 
From these facts, it is easy to verify the conditions (i) and (ii) in Lemma \ref{lem3} and 
we have desired convergence. 

Next, we turn to more subtle case $\theta \to 0$. 
We first verify the condition (i), that is, $\lim_{\theta \to 0}\P_{\theta}(\xi(B)=0)=e^{-|B|}$. 
Let us take $M>0$ so large that $B \subset [-M,M]^d$. 
Then it follows that
\begin{equation*}
 \sup_{x \in B,\, q \not\in [-2M,2M]^d}\big||x-q|^{\theta}-|q|^{\theta}\big|
 \le 2 \theta M^{\theta}
\end{equation*}
for $\theta < 1$ from the mean value theorem. Therefore, for any $\e > 0$, we have 
\begin{equation}
 1-\e < \frac{\int_B \exp\{-|x-q|^{\theta}\}\, dx}{|B| \exp\{-|q|^{\theta}\}} < 1+\e \label{approx}
\end{equation} 
for all $q \notin [-2M,2M]^d$ when $\theta$ is sufficiently small. 
The right inequality in \eqref{approx} gives us an upper bound 
\begin{equation*}
 \begin{split}
  \P_{\theta}(\xi(B)=0) = &\, \prod_{q \in \Z^d}\biggl( 1 - N(d, \theta) \int_B \exp\{-|x-q|^{\theta}\}\, dx \biggr)\\
  \le &\, \prod_{q \notin [-2M,\,2M]^d}\biggl( 1 - (1-\e) N(d, \theta) |B| \exp\{-|q|^{\theta}\} \biggr). 
 \end{split}
\end{equation*}
Using $1-a \le e^{-a}$ in the right-hand side, we get 
\begin{equation}
 \begin{split}
  &\limsup_{\theta \to 0} \P_{\theta}(\xi(B)=0)\\ 
  \le &\, \limsup_{\theta \to 0} \exp\Biggl\{ -(1-\e) N(d, \theta) |B| \sum_{q \notin [-2M,\,2M]^d} \exp\{-|q|^{\theta}\} \Biggr\}\\ 
  = &\, \exp\{ -(1-\e) |B| \}. \label{u}
 \end{split}
\end{equation}
In the third line, we have used the fact $N(d, \theta) \sum_{q \notin [-2M,\,2M]^d} \exp\{-|q|^{\theta}\} \to 1 
\; (\theta \to 0)$, which can be shown by the same way as \eqref{approx}. 
For the lower bound, we use the left inequality in \eqref{approx} as follows: 
\begin{equation*}
 \begin{split}
  \P_{\theta}(\xi(B)=0) = &\, \prod_{q \in \Z^d}\biggl( 1 - N(d, \theta) \int_B \exp\{-|x-q|^{\theta}\}\, dx \biggr)\\
  \ge &\, \prod_{q \in [-2M,\,2M]} \biggl( 1 - N(d, \theta) \int_B \exp\{-|x-q|^{\theta}\}\, dx \biggr)\\
  &\, \times\prod_{q \notin [-2M,\,2M]^d}\biggl( 1 - (1+\e) N(d, \theta) |B| \exp\{-|q|^{\theta}\} \biggr). 
 \end{split}
\end{equation*}
Since $N(d,\theta) \to 0 \; (\theta \to 0)$, the first factor in the right-hand side 
goes to 1 and also 
\begin{equation*}
 \sup_{q \in \Z^d}(1+\e)N(d,\theta)|B|\exp\{-|q|^{\theta}\} \to 0 \quad {\rm as} \quad \theta \to 0. 
\end{equation*}
Thus, we can use $1-a \ge e^{-(1+\e)a}$, which is valid only for small $a>0$, in the second factor 
and get
\begin{equation}
 \begin{split}
  &\liminf_{\theta \to 0} \P_{\theta}(\xi(B)=0)\\ 
  \ge &\, \liminf_{\theta \to 0} \exp\Biggl\{ -(1+\e)^2 N(d, \theta) |B| \sum_{q \notin [-2M,\,2M]^d} \exp\{-|q|^{\theta}\} \Biggr\}\\ 
  = &\, \exp\{ -(1+\e)^2 |B| \}. \label{l}
 \end{split}
\end{equation}
Now that we have \eqref{u} and \eqref{l} for an arbitrary $\e>0$, the condition (i) is verified.  

Next, we proceed to check the condition (ii), that is, $\limsup_{\theta \to 0}\E_{\theta}[\xi(B)] \le |B|$. 
Using the right inequality in \eqref{approx}, we find
\begin{equation*}
 \begin{split}
  \E_{\theta}[\xi(B)] = &\, \sum_{q \in \Z^d} \P_{\theta}(q+\xi_q \in B)\\
  =&\, N(d,\theta) \sum_{q \in \Z^d}\int_B \exp\{-|x-q|^{\theta}\}\, dx\\ 
  \le &\, (1-\e)^{-1} |B| N(d,\theta) \Biggl( \sum_{q \notin [-2M,\,2M]^d} \exp\{-|q|^{\theta}\} + (4M)^d\Biggr).  
 \end{split}
\end{equation*} 
Since the right-hand side of this inequality goes to $(1-\e)^{-1} |B|$ as $\theta \to 0$, we are done. \qed

\section{}\label{B}
In this appendix, we prove the finiteness of 
\begin{equation*}
 \sup_{r \ge 1} M_r = \sup_{r \ge 1}\inf_{(R_r,\, U_r) \in \smash[b]{\underline{\S}}_r'}\bigg\{\lambda_1^r(U_r, R_r)
 + \delta_c(r)^{-\theta}\int_{ R_r \setminus U_r } {\rm d}(x, \partial (R_r \setminus U_r))^{\theta}dx \bigg\}. 
\end{equation*}
To this end, we have only to find a specific sequence $\{(R_r,U_r) \in \underline{\S}_r'\}_{r \ge1} $ 
for which the functional in the infimum is bounded above. 
We take $R_r=(-1,1)^d$ and $U_r$ as in \eqref{punched} with $\delta(r)=\delta_c(r)$, 
the critical interval. 
This critical regime is called the ``constant capacity regime'' (see Section 3.2.B in \cite{Szn98}). 
Note that $R_r$ is not a \underline{\smash[b]{density}} box, with a slight abuse of notation, 
and thus $(R_r,U_r) \in \underline{\S}_r'$. 
To be honest, we have to rearrange $U_r$ a little so that each cubes 
being centered on $r^{-1} \Z^d$ but we ignore this point for simplicity. 

Firstly, it is easy to see that
\begin{equation*}
\delta_c(r)^{-\theta}\int_{ R_r \setminus U_r } {\rm d}(x, \partial (R_r \setminus U_r))^{\theta}dx
\asymp 1 \quad {\rm as} \quad r \to \infty. 
\end{equation*}
Thus, it suffices to show that the principal Dirichlet eigenvalue 
$\lambda_1^r(\emptyset, R_r \setminus U_r) \ge \lambda_1^r(U_r, R_r)$ is bounded above. 
This assertion is shown in Theorem 22.1 of \cite{Sim05} in the case $d = 3$. 
Since the same proof directly applies to all $d \ge 3$, we restrict the discussion on $d=2$. 
Let $\psi$ be the $L^2$-normalized principal Dirichlet eigenfunction in $(-1,1)^d$ and 
\begin{equation*}
 \phi_r(x)=\prod_{q \in \Z^d} \left(\frac{\log |x-\delta_c(r)q|-\log (1/r)}{\log (\delta_c(r)/2)-\log (1/r)} \right)_{+}\wedge 1.
\end{equation*}
Then it follows from the direct calculation that for arbitrary small $\e > 0$, 
\begin{equation*}
\inf\biggl\{\phi_r(x); x \in (-1,1)^d \setminus \bigcup_{q \in \Z^d}C(\delta_c(r)q , \e \delta_c(r))\biggr\} \to 1 
\end{equation*}
as $r \to \infty$. Moreover, it is not difficult to show that both $\|(\nabla \psi)\phi_r\|_2$ and 
$\|\psi (\nabla \phi_r)\|_2$ are bounded. 
We combine these three estimates to bound the right-hand side of 
\begin{equation*}
 \lambda_1^r(\emptyset, R_r \setminus U_r)
 \le \frac{1}{\| \psi\phi_r\|_2^2} \int_{U_r}\frac{1}{2}|\nabla (\psi\phi_r) |^2(x) \, dx
\end{equation*}
and get the desired result.


\begin{thebibliography}{10}

\bibitem{BLS07}
J.~Baker, M.~Loss, and G.~Stolz, 
  Minimizing the ground state energy of an electron in a randomly
  deformed lattice.
  Comm. Math. Phys. 283~(2) (2008) 397--415.

\bibitem{BA08}
I.~Ben-Ari, {T}he asymptotic shift for the principal eigenvalue for
  second order elliptic operators on bounded domains under
  various boundary conditions in the presence of small obstacles,
  Israel J. Math. 169 (2009) 181--220.

\bibitem{DV75c}
M.~D. Donsker, S.~R.~S. Varadhan, Asymptotics for the {W}iener sausage, Comm.
  Pure Appl. Math. 28~(4) (1975) 525--565.

\bibitem{Fuk73}
M.~Fukushima, On the spectral distribution of a disordered system and the range
  of a random walk, Osaka J. Math. 11 (1974) 73--85.

\bibitem{FU08}
R.~Fukushima, N.~Ueki, Classical and quantum behavior of the integrated density
  of states for a randomly perturbed lattice, preprint available at 
  http://www.math.h.kyoto-u.ac.jp/\~{}ueki/Frenkel-Noncpt.pdf.

\bibitem{GM90}
J.~G{\"a}rtner, S.~A. Molchanov, Parabolic problems for the {A}nderson model.
  {I}. {I}ntermittency and related topics, Comm. Math. Phys. 132~(3) (1990)
  613--655.

\bibitem{Kal86}
O.~Kallenberg, Random measures, 4th ed., Akademie-Verlag, Berlin, 1986.

\bibitem{Kas78}
Y.~Kasahara, Tauberian theorems of exponential type, J. Math. Kyoto Univ.
  18~(2) (1978) 209--219.

\bibitem{KM82a}
W.~Kirsch, F.~Martinelli, On the density of states of {S}chr\"odinger operators
  with a random potential, J. Phys. A 15~(7) (1982) 2139--2156.

\bibitem{KM82b}
W.~Kirsch, F.~Martinelli, On the spectrum of {S}chr\"odinger operators with a
  random potential, Comm. Math. Phys. 85~(3) (1982) 329--350.

\bibitem{Klo93}
F.~Klopp, Localization for semiclassical continuous random {S}chr\"odinger
  operators. {II}. {T}he random displacement model, Helv. Phys. Acta 66~(7-8)
  (1993) 810--841.

\bibitem{Lif65}
I.~M. Lifshitz.
\newblock Energy spectrum structure and quantum states of disordered condensed
  systems.
\newblock {\em Soviet Physics Uspekhi}, 7 (1965) 549--573.

\bibitem{Nak77}
S.~Nakao, On the spectral distribution of the {S}chr\"odinger operator with
  random potential, Japan. J. Math. (N.S.) 3~(1) (1977) 111--139.

\bibitem{Pov99}
T.~Povel, Confinement of {B}rownian motion among {P}oissonian obstacles in
  {${\bf R}\sp d,\ d\ge3$}, Probab. Theory Related Fields 114~(2) (1999)
  177--205.

\bibitem{RT75}
J.~Rauch, M.~Taylor, Potential and scattering theory on wildly perturbed
  domains, J. Funct. Anal. 18 (1975) 27--59.

\bibitem{Sim05}
B.~Simon, Functional integration and quantum physics, 2nd ed., AMS Chelsea
  Publishing, Providence, RI, 2005.

\bibitem{ST06}
M.~Sodin, B.~Tsirelson, Random complex zeroes. {II}. {P}erturbed lattice,
  Israel J. Math. 152 (2006) 105--124.

\bibitem{Szn90a}
A.-S. Sznitman, Lifschitz tail and {W}iener sausage. {I}, {II}, J. Funct. Anal.
  94~(2) (1990) 223--246, 247--272.

\bibitem{Szn91b}
A.-S. Sznitman, On the confinement property of two-dimensional {B}rownian
  motion among {P}oissonian obstacles, Comm. Pure Appl. Math. 44~(8-9) (1991)
  1137--1170.

\bibitem{Szn93b}
A.-S. Sznitman, Brownian survival among {G}ibbsian traps, Ann. Probab. 21~(1)
  (1993) 490--508.

\bibitem{Szn98}
A.-S. Sznitman, Brownian motion, obstacles and random media, Springer
  Monographs in Mathematics, Springer-Verlag, Berlin, 1998.

\bibitem{Tay76}
M.~E. Taylor, Scattering length and perturbations of {$-\Delta$} by positive
  potentials, J. Math. Anal. Appl. 53~(2) (1976) 291--312.

\bibitem{vdB92}
M.~van den Berg, {A} {G}aussian lower bound for the {D}irichlet heat kernel, 
  Bull. London Math. Soc. 24~(5) (1992) 475--477. 

\end{thebibliography}
\end{document}